\newtheorem{theorem}{Theorem}[section]
\newtheorem{lemma}[theorem]{Lemma}
\newtheorem{Problem}{Problem}
\begin{document}
\title
{\LARGE \textbf{Solution to an open problem on Laplacian ratio\thanks{ supported by NSFC (No. 12261071)and NSF of Qinghai Province (No. 2020-ZJ-920).} }}

\author{ Tingzeng Wu$^{a,b}$\thanks{{Corresponding author.\newline
\emph{E-mail address}: mathtzwu@163.com, a3566293588@163.com, hjlai@math.wvu.edu, zxl2748564443@163.com
}}, Xiangshuai Dong$^a$, Hong-Jian Lai$^c$, Xiaolin Zeng$^a$\\
{\small $^{a}$ School of Mathematics and Statistics, Qinghai Nationalities University, }\\
{\small  Xining, Qinghai 810007, P.R.~China} \\
{\small $^{b}$ Qinghai Institute of Applied Mathematics,   Xining, Qinghai, 810007, P.R.~China} \\
{\small $^{c}$ Department of Mathematics, West Virginia University, Morgantown, WV, USA} }
\date{}

\maketitle
\noindent {\bf Abstract:} Let $G$ be a graph.  The Laplacian ratio of $G$ is the permanent of the  Laplacian matrix of  $G$ divided   by the product of  degrees of all vertices. The computational complexity of Laplacian ratio is $\#{\rm P}$-complete. Brualdi and Goldwasser studied systematicly the  properties  of   Laplacian ratios of graphs. And they proposed an open problem: what is the minimum value of the Laplacian ratios of trees with $n$ vertices having diameter at least $k$ ?  In this paper,  we give a solution to the problem.

\noindent {\bf Keywords:} Permanent; Laplacian matrix; Laplacian ratio;  Tree; Diameter
\section{Introduction}%
 The {\em permanent} of an~$n$~square matrix~$M=[m_{ij}]$~with~$i, j \in \{1,2,\ldots,n\}$,~is defined as
\begin{eqnarray*}
{\rm per}M=\sum_{\sigma\in \Lambda_{n}}\prod_{i=1}^{n}m_{i\sigma(i)},
\end{eqnarray*}
where $\Lambda_{n}$ denotes the symmetry group of order $n$. Valiant \cite{val} has shown that computing the permanent is $\#{\rm P}$-complete even  when restricted to {\rm(0,1)}-matrices.

Let $G=(V(G),E(G))$ be a simple  graph with $V(G)=\{v_{1},v_{2},\ldots,v_{n}\}$ and $d_{i}$ denote the degree of vertex $v_{i}$ in $V(G)$ for $i=1,2,\ldots, n$. Let $D(G)$ be the diagonal matrix whose $(i,i$)-entry is $d_{i}$, and let $A(G)$ be the adjacency matrix of $G$. The matrix $L(G)=D(G)-A(G)$ is the {\em Laplacian matrix} of $G$. Let $PD(G)$ be the product of degrees all vertices in $G$. If $G$ has a $u,v$-path, then the {\em distance} from $u$ to $v$, denoted $d(u,v)$, is the least length of a $u,v$-path. The {\em diameter} of $G$ is $\max_{u,v\in V(G)}d(u,v)$. Suppose that $G-v$ and $G-uv$ denote the graph obtained from $G$ by deleting vertex $v\in V(G)$, or edge $uv\in E(G)$, respectively (this notation is
naturally extended if more than one vertex or edge is deleted). The path and star  of order $n$ are denoted by $P_{n}$ and $S_{n}$, respectively.  A {\em caterpillar} is a tree in which a single path (the spine) is incident  to (or containing) every edge. A double-star $DS(p,q)$ is a tree obtained by joining the center of stars $S_{p}$ and $S_{q}$ by one edge, where $p\geq2$ and $q\geq 2$.

The  {\em Laplacian ratio} of $G$ is defined by
\begin{eqnarray*}
\pi(G)=\frac{{\rm per}L(G)}{PD(G)}.
 \end{eqnarray*}
 By the definition as above, we can conclude that computing the  Laplacian ratio of a graph is $\#{\rm P}$-complete because of Valiant's result \cite{val} mentioned earlier.  Laplacian ratios of graphs were first considered by Brualdi and Goldwasser \cite{bru}. They determined the bounds for $\pi(G)$ when $G$ is a tree or generally, a bipartite graph.
In particular, they proposed three open problems for {\em Laplacian ratios} of trees in 1984. That is,

\begin{Problem}\label{prob1} (\cite{bru})
What is the minimum value of the Laplacian ratios of trees with $n$ vertices having diameter at least $k$?
\end{Problem}

\begin{Problem}\label{prob2} (\cite{bru})
What is the minimum value of the Laplacian ratios of trees with $n$ vertices having a $k$-matching?
\end{Problem}

\begin{Problem}\label{prob3} (\cite{bru})
Let $\mathscr {T}_{1}(n,k)$ be the family of all trees of order $n$ with diameter at most $k$, and $\mathscr {T}_{2}(n,k)$ be the family of all trees of order $n$ with matching number at most $k$. Determine the maximum value of  Laplacian ratios of graphs in  $\mathscr {T}_{1}$ and in $\mathscr {T}_{2}$.
\end{Problem}

Over the last forty years, only Problem \ref{prob2} is solved in \cite{gol}.  Problems \ref{prob1} and \ref{prob3}  remain open. In this paper,  our interest is to investigate Problem \ref{prob1}. Let $n$ and $k$  be integers. The Broom graph  $B(n,k)$ is a graph obtained  by attaching $n-k$ pendant edges to an endvertex of $P_{k}$.  We  prove the following main result.
\begin{figure}[htbp]
\begin{center}
\includegraphics[scale=0.5]{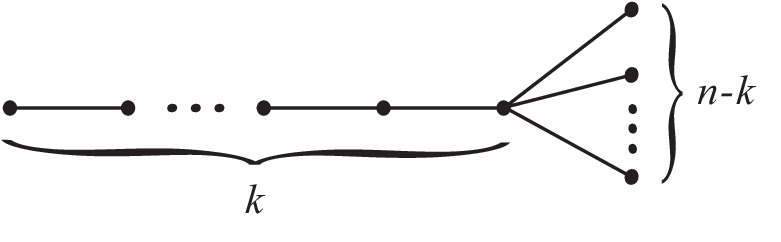}
\caption{\label{fig1}\small The Broom graph  $B(n,k)$. }
\end{center}
\end{figure}
\begin{theorem}\label{thm1}
Let $T$ be a tree with $n$ vertices and diameter at least $k$, Then
$$\pi(T)\geq\biggl[1+\sqrt{2}-\frac{\sqrt{2}}{2(n-k+1)}\biggl]\biggl(\frac{1+\sqrt{2}}{2}\biggl)^{k-2}
+\biggl[1-\sqrt{2}+\frac{\sqrt{2}}{2(n-k+1)}\biggl]\biggl(\frac{1-\sqrt{2}}{2}\biggl)^{k-2},$$
where equality holds if and only if tree $T$ is the broom graph $B(n,k)$.
\end{theorem}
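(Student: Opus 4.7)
The plan is to prove Theorem~\ref{thm1} in two stages: first derive a closed-form expression for $\pi(B(n,k))$, then show that the broom is extremal among trees of order $n$ with diameter at least $k$. For Stage~1 the key tool is the matching expansion
$${\rm per}\,L(T)=\sum_{M}\ \prod_{w\notin V(M)}\deg_T(w),$$
where the sum runs over all matchings $M$ of $T$ (including the empty matching). This identity holds for every tree because the only permutations contributing a nonzero term to ${\rm per}\,L(T)$ consist of fixed points (contributing $\deg_T$) and edge-transpositions (contributing $(-1)(-1)=1$); any longer cycle in the permutation would require a cycle of the same length in $T$, which does not exist. Fix $m:=n-k$ and set $\phi_k:={\rm per}\,L(B(m+k,k))$. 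Deleting the far leaf $v_k$ of the spine and splitting matchings according to whether $v_{k-1}v_k\in M$, while carefully tracking the degree change at the endpoint (via an auxiliary sequence in which the last spine vertex carries an artificially inflated degree), produces the linear recurrence
$$\phi_k=2\phi_{k-1}+\phi_{k-2}\qquad(k\geq 4),$$
with $\phi_2=2(m+1)$ and $\phi_3=6m+4$ by direct enumeration. Solving the characteristic equation $x^2-2x-1=0$, whose roots are $1\pm\sqrt{2}$, fitting coefficients from $\phi_2$ and $\phi_3$, and dividing by $PD(B(n,k))=(n-k+1)\cdot 2^{k-2}$ yields exactly the bound in Theorem~\ref{thm1} once one rewrites $(1\pm\sqrt{2})^k=((1\pm\sqrt{2})/2)^{k-2}(1\pm\sqrt{2})^2$ and collects.

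For Stage~2 I will show $\pi(T)\geq\pi(B(n,k))$ for every tree $T$ on $n$ vertices with diameter at least $k$, with equality only at $T=B(n,k)$. The argument has two parts. Part~(a) is the monotonicity $\pi(B(n,k))<\pi(B(n,k+1))$ for $2\leq k\leq n-1$; once the closed form is in hand this is a routine algebraic check using that the dominant root $(1+\sqrt{2})/2$ exceeds $1$ while the other summand decays exponentially in absolute value. Part~(b) shows that among trees of order $n$ with diameter exactly $d$, $\pi$ is minimized by $B(n,d)$: fix a diametral path $v_0v_1\cdots v_d$ of $T$ and, at each internal vertex $v_i$ ($1\leq i\leq d-1$) carrying a nontrivial branch $H$, apply a grafting move that relocates $H$ onto $v_1$ as pendant edges, iterating until the tree becomes $B(n,d)$. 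The central claim is that each grafting move does not increase $\pi$, and strictly decreases it whenever the move is nontrivial. Combining (a) and (b), every tree $T$ with diameter $d\geq k$ satisfies $\pi(T)\geq\pi(B(n,d))\geq\pi(B(n,k))$, with overall equality forcing $d=k$ and no grafting having occurred, i.e., $T=B(n,k)$.

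The main obstacle is the grafting lemma in Stage~2(b). Since the permanent admits no cancellation, both sides of the desired inequality are nonnegative sums over matchings, and the cleanest approach is to pair matchings of $T$ with matchings of the grafted tree $T'$ by a canonical bijection and partition them according to their intersection with $\{v_1,v_i\}$ (four cases depending on which of the two vertices is saturated). The comparison then reduces to an algebraic statement of the shape $\alpha A+\beta B\leq\alpha'A+\beta'B$ with nonnegative partial matching sums $A,B$ and explicit coefficients dictated by the local degree changes at $v_1$ and $v_i$. Verifying this inequality---and, for the uniqueness part, establishing its strictness---is likely the most delicate computation of the proof; should a direct expansion prove unwieldy, a secondary induction on the size of the transplanted branch $H$ should close the argument.
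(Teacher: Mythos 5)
Your Stage~1 is correct: the matching expansion of $\mathrm{per}\,L$ for trees, the recurrence $\phi_k=2\phi_{k-1}+\phi_{k-2}$ with $\phi_2=2(m+1)$, $\phi_3=6m+4$, and the division by $PD(B(n,k))=(n-k+1)2^{k-2}$ do reproduce the stated closed form (this is exactly what the paper imports from Brualdi--Goldwasser as Lemma~2.11, so re-deriving it is harmless), and the monotonicity in $k$ claimed in Stage~2(a) is a checkable consequence of that formula. The problem is Stage~2(b): the entire difficulty of the theorem is concentrated in the ``grafting lemma'' that relocating a branch $H$ from an internal spine vertex $v_i$ onto $v_1$ as pendant edges never increases $\pi$ (with strictness for the equality case), and you do not prove it --- you state it as the main obstacle and sketch a pairing argument you yourself flag as possibly unworkable. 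As written, the sketch cannot succeed in the simple form you describe: when $H$ is replaced by a star of pendant edges, matchings of $T$ that use two or more edges inside $H$, or edges of $H$ not incident with $v_i$, have no canonical partners in the grafted tree, so the comparison is not of the shape $\alpha A+\beta B\le\alpha'A+\beta'B$ governed only by the saturation pattern of $\{v_1,v_i\}$; the degree products $1/d(\mu)$ change globally over $V(H)$, not just at the two attachment vertices.

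This is precisely where the paper spends its effort. It splits your single move into two steps: Lemma~2.8(iii) (branch at $v_i$ replaced by pendant edges at $v_i$), whose proof already requires an auxiliary tree, several permanent recursions and case analysis on the diameter of the branch; and Lemma~2.9 (merging two pendant clusters), which only yields $\pi(T)>\min\{\pi(T_1),\pi(T_2)\}$ --- i.e., the authors do not establish the \emph{direction} of the relocation that you assert (always towards $v_1$). They then close the remaining gap not by a local move at all but by invoking the Brualdi--Goldwasser inequality $\mathrm{per}\,L(T)\ge \mathrm{per}\,L(B(n,k))$ (Lemma~2.10) together with the observation that after merging, $PD(T_1)=PD(B(n,k))$. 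So your proposed route is genuinely different in that it tries to replace Lemma~2.10 and the min-type Lemma~2.9 by a single one-sided local inequality, but that inequality is exactly the unproven core: without a full proof of it (including strictness, which your uniqueness claim also needs), the argument does not establish Theorem~\ref{thm1}. To repair the proposal you would either have to carry out a genuine proof of the grafting lemma (expect an analysis comparable in length to the paper's Lemmas~2.8--2.9), or follow the paper's strategy and quote the per-Laplacian extremal result for brooms to finish after the caterpillar reduction.
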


This paper is organized as follows. In Section 2, we first develop  some properties of the permanent of Laplacian matrices of graphs, and we introduce some graph operations that can be considered as graph transformations which will have smaller Laplacian ratios than that of the original graphs. Based on results as above, we give the proof of Theorem \ref{thm1}.  A brief summary will be  given in section 3.

\section{Main results}

Before proceeding to the proof of Theorem \ref{thm1}, we first prove some lemmas.

Lemma \ref{sss} follows from the definition of permanent and is well-known.
\begin{lemma}\label{sss}
Let $A$ be any $n\times n$ matrix and $i,j$ be indices  between $1$ and $n$. Then
$${\rm per} A=\sum_{i}a_{ij}{\rm per} A_{ij} ~{\rm and}~{\rm per}A=\sum_{j}a_{ij}{\rm per}A_{ij},$$
where $A_{ij}$ is the $(n-1)\times(n-1)$ matrix obtained by striking out the $i$th row and  $j$th column of $A$.
\end{lemma}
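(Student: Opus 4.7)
The plan is to derive both identities directly from the defining sum $\mathrm{per}(A)=\sum_{\sigma\in\Lambda_n}\prod_{k=1}^n a_{k\sigma(k)}$ by partitioning $\Lambda_n$ according to a single distinguished value of the permutation. This is the permanent analogue of Laplace's cofactor expansion, and the essential simplification is that the permanent carries no sign, so no $(-1)^{i+j}$ factor intervenes and only a clean bijective bookkeeping is required.

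For the column expansion, with $j$ fixed, every $\sigma\in\Lambda_n$ has a unique row index $i$ with $\sigma(i)=j$, namely $i=\sigma^{-1}(j)$. I would therefore partition $\Lambda_n$ into the $n$ disjoint classes $\Lambda_n^{(i,j)}=\{\sigma\in\Lambda_n:\sigma(i)=j\}$ for $i=1,\ldots,n$, factor the common entry $a_{ij}$ out of each summand in that class, and write
$$\mathrm{per}(A)=\sum_{i=1}^n a_{ij}\sum_{\sigma\in\Lambda_n^{(i,j)}}\prod_{k\neq i}a_{k\sigma(k)}.$$
The main step is to identify each inner sum with $\mathrm{per}(A_{ij})$. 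As $\sigma$ ranges over $\Lambda_n^{(i,j)}$, its restriction to $\{1,\ldots,n\}\setminus\{i\}$ ranges bijectively over all bijections from $\{1,\ldots,n\}\setminus\{i\}$ onto $\{1,\ldots,n\}\setminus\{j\}$. Applying the order-preserving relabeling of both index sets by $\{1,\ldots,n-1\}$, each such restriction corresponds to a unique $\tau\in\Lambda_{n-1}$ indexing the entries of $A_{ij}$, and the product $\prod_{k\neq i}a_{k\sigma(k)}$ matches $\prod_{\ell=1}^{n-1}(A_{ij})_{\ell\tau(\ell)}$ entry by entry, so the inner sum equals $\mathrm{per}(A_{ij})$.

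The row expansion is entirely symmetric: fix a row $i$, partition $\Lambda_n$ according to the value $j=\sigma(i)$, factor $a_{ij}$ out of each class, and use the same bijective correspondence on the complementary index sets to recognize the residual sum as $\mathrm{per}(A_{ij})$. The only real obstacle is the notational bookkeeping of the two order-preserving relabelings that identify $\{1,\ldots,n\}\setminus\{i\}$ and $\{1,\ldots,n\}\setminus\{j\}$ with the index set of $A_{ij}$, but this is routine precisely because, in contrast with the determinant, no sign needs to be tracked and the permanent is invariant under independent row and column permutations.
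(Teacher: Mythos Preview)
Your proposal is correct and is precisely the standard derivation from the definition of the permanent that the paper alludes to; the paper itself does not spell out a proof, merely stating that the lemma ``follows from the definition of permanent and is well-known.'' Your partition of $\Lambda_n$ by the value (or preimage) at a fixed index, together with the order-preserving relabeling to identify the inner sum with $\mathrm{per}(A_{ij})$, is exactly the argument one would expect here.
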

Let $G$ be a simple graph, and let $S\subseteq V(G)$ be a vertex subset. We use $L_{S}(G)$ to denote the principal submatrix of $L(G)$ formed by deleting the rows and columns corresponding to
all vertices of $S\subseteq V(G)$. In particular, if $S=\{v\}$ with $v\in V(G)$, then
$L_{\{v\}}(G)$ is simply written as $L_{v}(G)$.

Suppose that $v$ is a vertex of $G$. Define $\mathscr {C}_{G}(v)$ to be the set of cycles containing the vertex $v$ in $G$, and $N(v)$ the neighborhood of $v$.
\begin{lemma}
Let $G$ be a simple graph, and $v\in V(G)$ be a vertex.  Then
\begin{eqnarray*}
{\rm per} L(G)=d(v)~{\rm per} L_{v}(G)+\sum\limits_{u\in N(v)}{\rm per }L_{vu}(G)+2\sum\limits_{C\in \mathscr {C}_{G}(v)}(-1)^{|V(C)|}{\rm per }L_{V(C)}(G).
\end{eqnarray*}
\end{lemma}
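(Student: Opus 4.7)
The plan is to expand ${\rm per}\,L(G)$ directly from the definition
\begin{eqnarray*}
{\rm per}\,L(G)=\sum_{\sigma\in \Lambda_n}\prod_{i=1}^{n}L(G)_{i,\sigma(i)}
\end{eqnarray*}
and to partition the permutations $\sigma$ of $V(G)$ according to the $\sigma$-cycle that contains the distinguished vertex $v$. This refines the plain row-Laplace expansion of Lemma \ref{sss} in a way that simultaneously reads off the degree contribution at $v$, the edges at $v$, and the cycles of $G$ through $v$.

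First I would separate three cases. If $\sigma(v)=v$, the factor from row $v$ is $L(G)_{vv}=d(v)$ and the remaining factors run over all permutations of $V(G)\setminus\{v\}$, contributing $d(v)\,{\rm per}\,L_v(G)$. If $\sigma$ contains $v$ in a transposition $(v\,u)$, then for a nonzero term we need $L(G)_{vu}L(G)_{uv}\neq 0$, i.e.\ $u\in N(v)$; the two entries multiply to $(-1)^2=1$, and the remaining factors yield ${\rm per}\,L_{vu}(G)$, so summing over $u\in N(v)$ contributes $\sum_{u\in N(v)}{\rm per}\,L_{vu}(G)$. In the remaining case $v$ lies in a $\sigma$-cycle of length $k\geq 3$, say $v=w_0\to w_1\to\cdots\to w_{k-1}\to w_0$; nonzero contribution forces every consecutive pair to be an edge of $G$, so $\{w_0,w_1,\ldots,w_{k-1}\}$ is the vertex set of a cycle $C\in \mathscr{C}_G(v)$ with $|V(C)|=k$. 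The product of the $k$ off-diagonal Laplacian entries equals $(-1)^k=(-1)^{|V(C)|}$, the remaining factors give ${\rm per}\,L_{V(C)}(G)$, and each undirected $C$ is realized by exactly two such $\sigma$-cycles (its two orientations), producing the factor of $2$.

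The main obstacle I anticipate is purely bookkeeping: keeping the two cycle orientations of each $C\in\mathscr{C}_G(v)$ distinct from one another, while not lumping them together with the transpositions, which possess only one orientation and must therefore be handled in their own case. Once this case split is executed cleanly, adding the three contributions reproduces the stated identity with no further calculation.
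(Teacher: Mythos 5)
Your proposal is correct and follows essentially the same route as the paper: the paper likewise writes each permutation as $\gamma_{1}\sigma'$ where $\gamma_{1}$ is the cycle through $v$, splits into the fixed-point case, the transposition case (edges at $v$), and the case of a cycle of length at least $3$ through $v$, with the factor $2$ coming from the two orientations and the sign $(-1)^{|V(C)|}$ from the off-diagonal entries. No substantive difference.
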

\begin{proof}
Suppose that $V(G)=\{v_{1}$, $v_{2}$, $\ldots$, $v_{n}\}$. Consider a term $a_{1\sigma(1)}a_{2\sigma(2)}\cdots a_{n\sigma(n)}$ in the expansion of ${\rm per}L(G)$, where $a_{ij}$ is the $(i,j)$-entry of $L(G)$.  Note that $a_{ii}=d(v_{i})$, and when $i\neq j$, if $v_{i}v_{j}\in E(G)$, then $a_{ij}=-1$;  otherwise $a_{ij}=0$,  Hence if $a_{1\sigma(1)}a_{2\sigma(2)}\cdots a_{n\sigma(n)}\neq 0$, then  $j=\sigma(j)$ or $v_{j\sigma_{j}}\in E(G)$. Next, we only consider the situation that the permanent expansion of $L(G)$ is not $0$.  Without loss of generality, let $v=v_{1}$. Hence $a_{11}=d(v)$. Let $\sigma$ be a set of disjoint cyclic  permutation.  When $\gamma_{1}$ is a cycle in $\sigma$, we write $\sigma=\gamma_{1}\sigma^{'}$. Define $P=\{\sigma|a_{1\sigma(1)}a_{2\sigma(2)}\cdots a_{n\sigma(n)}\neq0\}$. According to the length of $\gamma_{1}$, $\sigma$ has three situations as follows:

$S_{1}=\{\sigma\in P, \gamma_{1}=(1)$,  i.e., $\sigma$ fixes $1\}$,

$S_{2}^{j}=\{\sigma\in P, \gamma_{1}=(1j)$,  i.e., $\gamma_{1}$ corresponds to an edge $vv_{j}$ of $G\}$,

$S_{C}=\{\sigma\in P, \gamma_{1}$ corresponds to a cycle $C$ of length $l\geq3$ which contains the vertex $v$ of $G\}$.

Then, as $S_{1}, S_{2}^{j}$ (for any $j\neq1$) and $S_{C}$ are mutually disjoint and $P=S_{1}\cup S_{2}^{j}\cup S_{C}$, we have

\begin{eqnarray}
&&{\rm per} L(G)\nonumber\\
&=&\sum_{\sigma}a_{1\sigma(1)}a_{2\sigma(2)}\cdots a_{n\sigma(n)}\nonumber\\
&=&\sum_{\sigma\in S_{1}}a_{1\sigma(1)}a_{2\sigma(2)}\cdots a_{n\sigma(n)}+\sum_{j}\sum_{\sigma\in S_{2}^{j}}a_{1\sigma(1)}a_{2\sigma(2)}\cdots a_{n\sigma(n)}\nonumber\\
&&+\sum_{C}\sum_{\sigma\in S_{C}}a_{1\sigma(1)}a_{2\sigma(2)}\cdots a_{n\sigma(n)}\nonumber\\
&=&d(v)\sum_{\sigma^{'} \atop \gamma_{1}\sigma^{'}\in S_{1}}a_{2\sigma^{'}(2)}\cdots a_{n\sigma^{'}(n)}
+\sum_{vv_{j}\in E(G)}\sum_{\sigma^{'} \atop  \gamma_{1}\sigma^{'}\in S_{2}^{j}}a_{2\sigma^{'}(2)}\cdots a_{(j-1)\sigma^{'}(j-1)}a_{(j+1)\sigma^{'}(j+1)}\nonumber\\
&&\cdots a_{n\sigma^{'}(n)}+2\sum_{C\in \mathscr {C}_{G}(v)}(-1)^{|V(C)|}\sum_{\sigma^{'} \atop  \gamma_{1}\sigma^{'}\in S_{C}}a_{i_{1}\sigma^{'}(i_{1})}a_{i_{2}\sigma^{'}(i_{2})}\cdots a_{i_{n-l}\sigma^{'}(i_{i_{n-l}})}\nonumber\\
&=&d(v)~{\rm per} L_{v}(G)+\sum\limits_{vv_{j}\in E(G)}{\rm per }L_{vu}(G)+2\sum\limits_{C\in \mathscr {C}_{G}(v)}(-1)^{|V(C)|}{\rm per }L_{V(C)}(G),\nonumber
\end{eqnarray}
where in the penultimate step $i_{1}$, $\ldots$, $i_{n-l}$ are indices of the vertices of the graph  $G-V(C)$.
\end{proof}
\begin{lemma}
If $v$ is a pendant vertex of $G$,  and $u$ is the vertex adjacent to $v$. Then
\begin{eqnarray*}
{\rm per} L(G)={\rm per} L(G-v)+2~{\rm per} L_{u}(G-v).
\end{eqnarray*}
\end{lemma}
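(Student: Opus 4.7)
The plan is to apply the vertex-expansion formula from the previous lemma at the pendant vertex $v$, and then carefully account for how the diagonal (degree) entries change when passing between principal submatrices of $L(G)$ and Laplacian matrices of the smaller graph $G-v$.

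First I would specialize the previous lemma to the pendant vertex $v$. Since $v$ is pendant, we have $d(v)=1$, the neighborhood $N(v)=\{u\}$ is a single vertex, and no cycle of $G$ contains $v$, so $\mathscr{C}_G(v)=\emptyset$. The expansion therefore collapses to
\begin{eqnarray*}
{\rm per}\,L(G) = {\rm per}\,L_v(G) + {\rm per}\,L_{vu}(G).
\end{eqnarray*}

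Next I would compare $L_v(G)$ with $L(G-v)$. These two matrices are indexed by the same set $V(G)\setminus\{v\}$, and they agree off the diagonal and on every diagonal entry except the $(u,u)$-entry: in $L_v(G)$ it equals $d_G(u)$, while in $L(G-v)$ it equals $d_{G-v}(u)=d_G(u)-1$. Writing $d_G(u)=d_{G-v}(u)+1$ and using the multilinearity of the permanent in row $u$ (which is just Lemma~\ref{sss} applied to the two summands), I would split
\begin{eqnarray*}
{\rm per}\,L_v(G)={\rm per}\,L(G-v)+{\rm per}\,L_u(G-v),
\end{eqnarray*}
where the second term comes from expanding along a row whose only nonzero entry is a $1$ in the diagonal position $u$.

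For the remaining term ${\rm per}\,L_{vu}(G)$, I would observe that $L_{vu}(G)$ is a principal submatrix of $L(G)$ indexed by $V(G)\setminus\{u,v\}$. For every such remaining vertex $w$, the fact that $v$ is adjacent only to $u$ gives $d_G(w)=d_{G-v}(w)$, and the off-diagonal entries are unchanged. Hence $L_{vu}(G)=L_u(G-v)$ as matrices, so ${\rm per}\,L_{vu}(G)={\rm per}\,L_u(G-v)$. Substituting the two pieces back yields
\begin{eqnarray*}
{\rm per}\,L(G)={\rm per}\,L(G-v)+2\,{\rm per}\,L_u(G-v),
\end{eqnarray*}
which is the desired identity. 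I expect the main subtlety to be bookkeeping rather than any real obstacle: one has to distinguish carefully between $L_v(G)$ (a submatrix of $L(G)$, with $u$-diagonal entry $d_G(u)$) and $L(G-v)$ (the Laplacian of a smaller graph, with $u$-diagonal entry $d_G(u)-1$), and this is precisely what produces the factor of $2$ on the right-hand side.
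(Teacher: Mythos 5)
Your proof is correct and follows essentially the same route as the paper: apply Lemma 2.2 at the pendant vertex so the cycle terms vanish, use linearity of the permanent in the $u$-row to split ${\rm per}\,L_v(G)$ into ${\rm per}\,L(G-v)$ plus a term equal to ${\rm per}\,L_u(G-v)$, and identify $L_{vu}(G)=L_u(G-v)$. Your version just spells out the diagonal-entry bookkeeping more explicitly than the paper does.
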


\begin{proof}
By Lemma 2.2 with $d_{v}=1$ and $\mathscr {C}_{G}=\phi$, we have ${\rm per} L(G)={\rm per} L_{v}(G)+{\rm per} L_{uv}(G)$. Using the linearity of the permanent as a function of any row or column, it can be known that
${\rm per} L_{v}(G)={\rm per} L(G-v)+{\rm per} L_{uv}(G)$.
By the definition of ${\rm per} L_{uv}(G)$, we have
${\rm per} L_{uv}(G)={\rm per} L_{u}(G-v)$.
According to arguments as above, we obtain that
$${\rm per} L(G)={\rm per} L(G-v)+2~{\rm per} L_{uv}(G)={\rm per} L(G-v)+2~{\rm per} L_{u}(G-v).$$
\end{proof}
\begin{lemma}
Let $v$ be a vertex of $G$. Suppose that $H$ is a proper subgraph of $G$ satisfying $v\notin V(H)$. Then
\begin{eqnarray*}
{\rm per} L_{V(H)}(G)&=&d(v)~{\rm per} L_ {V(H)\cup \{v\}}(G)+\sum\limits_{uv\in E(G) \atop u\notin V(H)}{\rm per }L_ {V(H)\cup \{u,v\}}(G)\\
&&+2\sum\limits_{C\in \mathscr {C}_{G}(v) \atop V(C)\cap V(H)=\emptyset}(-1)^{|V(C)|}{\rm per }L_{V(C)\cup V(H)}(G).
\end{eqnarray*}
\end{lemma}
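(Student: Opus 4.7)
The plan is to repeat the proof strategy of Lemma 2.2, now applied to the principal submatrix $L_{V(H)}(G)$ instead of to $L(G)$ itself. First I would record the structural observation that $L_{V(H)}(G)$ is indexed by $V(G)\setminus V(H)$, with diagonal entry at index $w$ equal to $d_G(w)$ (it is a principal submatrix, \emph{not} the Laplacian of $G-V(H)$) and with off-diagonal entry equal to $-1$ when the two vertices are adjacent in $G$ and $0$ otherwise. Since $v\notin V(H)$, the row and column indexed by $v$ live inside this submatrix, and we may expand $\mathrm{per}\,L_{V(H)}(G)$ by singling out, in every nonzero term $\prod_{w\notin V(H)}a_{w\sigma(w)}$, the cycle $\gamma_{1}$ of $\sigma$ that contains $v$.

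Next I would partition the surviving permutations $\sigma$ of $V(G)\setminus V(H)$ according to $\gamma_{1}$, exactly mirroring the $S_1,S_2^j,S_C$ decomposition of Lemma 2.2, but now all indices occurring in $\gamma_{1}$ must lie in $V(G)\setminus V(H)$:
\begin{itemize}
\item[(i)] $\gamma_{1}=(v)$ contributes $a_{vv}=d_G(v)$ times $\mathrm{per}\,L_{V(H)\cup\{v\}}(G)$;
\item[(ii)] $\gamma_{1}=(v\,u)$ for $u\in N_G(v)$ with $u\notin V(H)$ contributes $a_{vu}a_{uv}=(-1)(-1)=1$ times $\mathrm{per}\,L_{V(H)\cup\{v,u\}}(G)$, giving the sum $\sum_{uv\in E(G),\,u\notin V(H)}\mathrm{per}\,L_{V(H)\cup\{u,v\}}(G)$;
\item[(iii)] $\gamma_{1}$ is a cyclic permutation of length $\ell\geq 3$ supported on the vertex set of a cycle $C\in\mathscr{C}_G(v)$; for this to be counted in the expansion of $L_{V(H)}(G)$, every vertex of $C$ must be an available index, i.e.\ $V(C)\cap V(H)=\emptyset$.
\end{itemize}
Summing cases (i)--(iii) over all admissible $u$ and $C$ reproduces the right-hand side of the identity.

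The only step requiring genuine care is the sign/multiplicity accounting in case (iii): each graph cycle $C$ of length $\ell=|V(C)|\geq 3$ yields exactly two cyclic permutations (the two orientations of $C$), each of which picks up $\ell$ off-diagonal entries equal to $-1$, producing a factor $(-1)^{\ell}=(-1)^{|V(C)|}$. Taking both orientations gives the aggregate $2(-1)^{|V(C)|}\mathrm{per}\,L_{V(H)\cup V(C)}(G)$, which is the third term of the claimed formula. Apart from this standard bookkeeping, the argument is nothing beyond a row/column expansion of a permanent, and no additional obstacle arises; the restrictions $u\notin V(H)$ and $V(C)\cap V(H)=\emptyset$ enter automatically from the fact that only indices in $V(G)\setminus V(H)$ are present in the submatrix. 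When $H$ is empty, the identity specializes to Lemma 2.2, a useful sanity check.
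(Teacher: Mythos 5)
Your proposal is correct and follows essentially the same route as the paper: the paper's proof also expands $\mathrm{per}\,L_{V(H)}(G)$ over permutations of the indices in $V(G)\setminus V(H)$, splits them by the cycle $\gamma_{1}$ containing $v$ into the fixed-point, edge, and cycle classes $S_{1}, S_{2}^{j}, S_{C}$ exactly as in Lemma 2.2, and reads off the three terms. Your explicit remarks that $L_{V(H)}(G)$ keeps the degrees from $G$ on its diagonal and that the constraints $u\notin V(H)$, $V(C)\cap V(H)=\emptyset$ arise automatically from the available indices are exactly the points the paper leaves implicit.
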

\begin{proof}
The proof is similar to that of Lemma 2.2, Without loss of generality, we denote
$V(H)=\{v_{h+1}, v_{h+2}, ..., v_{n}\}$. Then, as $S_{1}, S_{2}^{j}$ (for any $j\neq1$) and $S_{C}$ are mutually disjoint and $P=S_{1}\cup S_{2}^{j}\cup S_{C}$, we have
\begin{eqnarray}\label{equ22}
&&{\rm per}~L_{V(H)}(G)\nonumber\\
&=&\sum_{\sigma \atop \gamma_{1}\sigma^{'} \in S_{1}}a_{1\sigma (1)}a_{2\sigma (2)}\cdots a_{h\sigma (h)}\nonumber\\
&=&\sum_{\sigma \in S_{1}}a_{1\sigma (1)}a_{2\sigma (2)}\cdots a_{h\sigma (h)}+\sum_{j}\sum_{\sigma \in S_{2}^{j}}a_{1\sigma (1)}a_{2\sigma (2)}\cdots a_{h\sigma (h)}\nonumber\\
&&+\sum_{C}\sum_{\sigma \in S_{C}}a_{1\sigma (1)}a_{2\sigma (2)}\cdots a_{h\sigma (h)}\nonumber\\
&=&d(v)\sum_{\sigma^{'} \atop \gamma_{1}\sigma^{'}\in S_{1}}a_{2\sigma^{'}(2)}\cdots a_{h\sigma^{'}(h)}
+\sum_{vv_{j}\in E(G)}\sum_{\sigma^{'} \atop  \gamma_{1}\sigma^{'}\in S_{2}^{j}}a_{2\sigma^{'}(2)}\cdots a_{(j-1)\sigma^{'}(j-1)}a_{(j+1)\sigma^{'}(j+1)}\nonumber\\
&&\cdots a_{h\sigma^{'}(h)}+2\sum_{C\in \mathscr {C}_{G}(v)}(-1)^{|V(C)|}\sum_{\sigma^{'} \atop  \gamma_{1}\sigma^{'}\in S_{C}}a_{i_{1}\sigma^{'}(i_{1})}a_{i_{2}\sigma^{'}(i_{2})}\cdots a_{i_{n-h-l}\sigma^{'}(i_{i_{n-h-l}})}\nonumber\\
&=&d(v)~{\rm per} L_ {V(H)\cup \{v\}}(G)+\sum\limits_{vv_{j}\in E(G) \atop u\notin V(H)}{\rm per }L_ {V(H)\cup \{u,v\}}(G)\nonumber\\
&&+2\sum\limits_{C\in \mathscr {C}_{G}(v) \atop V(C)\cap V(H)=\emptyset}(-1)^{|V(C)|}{\rm per }L_{V(C)\cup V(H)}(G),\nonumber
\end{eqnarray}
where in the penultimate step $i_{1}, \ldots, i_{n-h-l}$ are indices of the vertices of the graph $G-\{V(H)$\\
$\cup V(C)\}$.
\end{proof}
\begin{lemma}
Let $e=uv$ be an edge of $G$, and let $\mathscr {C}_{G}(e)$ be a set of cycles containing the edge $e$ in $G$. Then
\begin{eqnarray*}
{\rm per} L(G)&=&{\rm per} L(G-e)+{\rm per} L_{v}(G-e)+{\rm per} L_{u}(G-e)\\
&&+2~{\rm per} L_{uv}(G)+2\sum\limits_{C\in \mathscr {C}_{G}(e)}(-1)^{|V(C)|}{\rm per }L_{V(C)}(G).
\end{eqnarray*}
\end{lemma}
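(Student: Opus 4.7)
The strategy is to expand ${\rm per}\, L(G)$ via Lemma 2.2 at vertex $v$ in $G$, to convert each resulting $L(G)$-submatrix permanent into its $L(G-e)$-analogue using multilinearity of the permanent, and then to collect the resulting clusters with the aid of Lemma 2.2 and Lemma 2.4 applied inside $G-e$. The first step is to apply Lemma 2.2 at $v$ in $G$, then split $N_G(v) = \{u\} \sqcup N_{G-e}(v)$ and decompose $\mathscr{C}_G(v) = \mathscr{C}_{G-e}(v) \sqcup \mathscr{C}_G(e)$ (every cycle of $G$ through $e$ automatically contains $v$), writing ${\rm per}\, L(G)$ as a sum of six groups of terms.

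\textbf{Conversion step.} For any vertex subset $S$ with $v \in S$ and $u \notin S$, the matrices $L_S(G)$ and $L_S(G-e)$ differ only in the $(u,u)$ diagonal entry, which is larger by $1$ on the $G$-side. Splitting that entry and using multilinearity of the permanent then gives ${\rm per}\, L_S(G) = {\rm per}\, L_S(G-e) + {\rm per}\, L_{S \cup \{u\}}(G-e)$, while if $\{u,v\} \subseteq S$ the two matrices simply coincide. Apply these identities to each of $L_v(G)$, $L_{vw}(G)$ for $w \in N_{G-e}(v)$, and $L_{V(C)}(G)$ for $C \in \mathscr{C}_{G-e}(v)$ (splitting the last case by whether $u \in V(C)$). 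Substituting back and using $d_G(v) = d_{G-e}(v)+1$, one verifies that the terms not mentioning $u$ reassemble exactly as the Lemma 2.2 expansion of ${\rm per}\, L(G-e)$ at $v$; collapsing this cluster yields
\[
{\rm per}\, L(G) = {\rm per}\, L(G-e) + {\rm per}\, L_v(G-e) + (d_{G-e}(v)+2)\,{\rm per}\, L_{uv}(G) + X + 2\!\!\sum_{C \in \mathscr{C}_G(e)}\!\!(-1)^{|V(C)|}\, {\rm per}\, L_{V(C)}(G),
\]
where $X = \sum_{w \in N_{G-e}(v)} {\rm per}\, L_{\{u,v,w\}}(G-e) + 2 \sum_{C \in \mathscr{C}_{G-e}(v),\, u \notin V(C)} (-1)^{|V(C)|}\, {\rm per}\, L_{V(C) \cup \{u\}}(G-e)$.

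\textbf{Identifying $X$ via Lemma 2.4.} Now apply Lemma 2.4 in the graph $G-e$ with $H$ the single-vertex subgraph on $\{u\}$, expanding at the vertex $v$. Since $L_{uv}(G-e) = L_{uv}(G)$ (both endpoints of $e$ are already deleted) and $N_{G-e}(v) = N_G(v)\setminus\{u\}$, the identity produced by Lemma 2.4 is precisely ${\rm per}\, L_u(G-e) = d_{G-e}(v)\, {\rm per}\, L_{uv}(G) + X$. Substituting $X = {\rm per}\, L_u(G-e) - d_{G-e}(v)\, {\rm per}\, L_{uv}(G)$ into the previous display, the coefficient of ${\rm per}\, L_{uv}(G)$ collapses from $d_{G-e}(v)+2$ to exactly $2$, and the claimed identity emerges.

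\textbf{Main obstacle.} The argument is almost entirely bookkeeping; the most delicate part is tracking which principal-submatrix permanents belong to which cycle family ($\mathscr{C}_G(v)$, $\mathscr{C}_{G-e}(v)$, or $\mathscr{C}_G(e)$) and verifying that the ``$u \in V(C)$ vs.\ $u \notin V(C)$'' split introduced by the conversion step meshes correctly with the cycle sum produced by Lemma 2.4 applied in $G-e$, so that all cross-terms involving $L_{uv}(G)$ collapse to the required coefficient $2$.
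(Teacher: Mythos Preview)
Your proof is correct and follows essentially the same approach as the paper: expand $\mathrm{per}\,L(G)$ via Lemma~2.2, use multilinearity to convert the $L(G)$-submatrix permanents into $L(G-e)$-analogues, and then invoke Lemma~2.4 to collapse the residual terms. The only cosmetic differences are that the paper expands at $u$ rather than $v$ (a symmetric choice) and computes $\mathrm{per}\,L(G)$ and $\mathrm{per}\,L(G-e)$ separately before subtracting, whereas you convert term by term and recognize the $\mathrm{per}\,L(G-e)$ expansion as a sub-sum; the underlying identities and bookkeeping are the same.
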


\begin{proof}
By Lemma 2.2, we have
\begin{eqnarray*}
{\rm per} L(G)=d(u)~{\rm per} L_{u}(G)+\sum\limits_{uw\in E(G)}{\rm per }L_{uw}(G)+2\sum\limits_{C\in \mathscr {C}_{G}(u)}(-1)^{|V(C)|}{\rm per }L_{V(C)}(G)
\end{eqnarray*}
and
\begin{eqnarray*}
{\rm per} L(G-e)&=&(d(u)-1)~{\rm per} L_{u}(G-e)+\sum\limits_{uw\in E(G-e)}{\rm per }L_{uw}(G-e)\\
&&+2\sum\limits_{C\in \mathscr {C}_{G-e}(u)}(-1)^{|V(C)|}{\rm per }L_{V(C)}(G-e).
\end{eqnarray*}
Therefore,
\begin{eqnarray}\label{equ23}
&&{\rm per} L(G)-{\rm per} L(G-e)\nonumber\\
&=&d(u)~{\rm per} L_{u}(G)-(d(u)-1)~{\rm per} L_{u}(G-e)+\biggl[\sum\limits_{uw\in E(G)}{\rm per }L_{uw}(G)\nonumber\\
&&-\sum\limits_{uw\in E(G-e)}{\rm per }L_{uw}(G-e)\biggl]+2~\biggl[\sum\limits_{C\in \mathscr {C}_{G}(u)}(-1)^{|V(C)|}{\rm per }L_{V(C)}(G)\\
&&-\sum\limits_{C\in \mathscr {C}_{G-e}(u)}(-1)^{|V(C)|}{\rm per }L_{V(C)}(G-e)\biggl].\nonumber
\end{eqnarray}
Since
\begin{eqnarray}\label{equ24}
{\rm per} L_{u}(G)={\rm per} L_{u}(G-e)+{\rm per }L_{uv}(G-e)
={\rm per} L_{u}(G-e)+{\rm per }L_{uv}(G).
\end{eqnarray}
Similarly, we have both
\begin{eqnarray}\label{equ25}
\sum_{uw\in E(G) \atop w\neq v}{\rm per }L_{uw}(G)=\sum_{uw\in E(G-e)}{\rm per }L_{uw}(G-e)+\sum_{uw\in E(G) \atop w\neq v}{\rm per }L_{\{u,w,v\}}(G)
\end{eqnarray}
and
\begin{eqnarray}\label{equ26}
\sum_{C\in \mathscr {C}_{G-e}(u) \atop v\notin V(C)}(-1)^{|V(C)|}{\rm per }L_{V(C)}(G)&=&\sum_{C\in \mathscr {C}_{G-e}(u) \atop v\notin V(C)}(-1)^{|V(C)|}{\rm per }L_{V(C)}(G-e)\nonumber\\
&&+\sum_{C\in \mathscr {C}_{G-e}(u) \atop v\notin V(C)}(-1)^{|V(C)|}{\rm per }L_{V(C)\cup \{v\}}(G-e).
\end{eqnarray}
Substituting (\ref{equ24}),  (\ref{equ25}), (\ref{equ26}) into (\ref{equ23}), we have
\begin{eqnarray}\label{equ27}
&&{\rm per} L(G)-{\rm per} L(G-e)\nonumber\\
&=&(d(u)+1)~{\rm per }L_{uv}(G)+{\rm per }L_{u}(G-e)\nonumber\\
&&+\sum_{uw\in E(G) \atop w\neq v}{\rm per }L_{\{u,w,v\}}(G)+2\sum\limits_{C\in \mathscr {C}_{G}(e)}(-1)^{|V(C)|}{\rm per }L_{V(C)}(G)\nonumber\\
&&+2\sum_{C\in \mathscr {C}_{G-e}(u) \atop v\notin V(C)}(-1)^{|V(C)|}{\rm per }L_{V(C)\cup \{v\}}(G-e).
\end{eqnarray}
By Lemma 2.4, we have
\begin{eqnarray}\label{equ28}
{\rm per} L_{v}(G-e)&=&(d(u)-1)~{\rm per }L_{uv}(G-e)+\sum_{uw\in E(G-e) \atop w\neq v}{\rm per }L_{\{u,w,v\}}(G-e)\nonumber\\
&&+2\sum_{C\in \mathscr {C}_{G-e}(u) \atop v\notin V(C)}(-1)^{|V(C)|}{\rm per }L_{V(C)\cup \{v\}}(G-e).
\end{eqnarray}
Substituting (\ref{equ28}) into (\ref{equ27}), we have
\begin{eqnarray*}
{\rm per} L(G)-{\rm per} L(G-e)&=&{\rm per} L_{v}(G-e)+{\rm per} L_{u}(G-e)\\
&&+2~{\rm per} L_{uv}(G)+2\sum\limits_{C\in \mathscr {C}_{G}(e)}(-1)^{|V(C)|}{\rm per }L_{V(C)}(G).
\end{eqnarray*}
It follow that
\begin{eqnarray*}
{\rm per} L(G)&=&{\rm per} L(G-e)+{\rm per} L_{v}(G-e)+{\rm per} L_{u}(G-e)\\
&&+2~{\rm per} L_{uv}(G)+2\sum\limits_{C\in \mathscr {C}_{G}(e)}(-1)^{|V(C)|}{\rm per }L_{V(C)}(G).
\end{eqnarray*}
\end{proof}
Let $k$ be a nonnegative  integer, and let $\mu$ be a $k$-matching of $G$ (i.e., a set of $k$ vertex disjoint edges) which meets vertices $v_{_{i_{1}}}, v_{_{i_{2}}}, \ldots,  v_{_{i_{2k}}}$ such that $v_{_{i_{j}}}$ has degree $d_{_{i_{j}}}$, $1\leq j\leq2k$. Define
$$d_{G}(\mu)=
\begin{cases}
d_{_{i_{1}}}\cdots d_{_{i_{2k}}}~~~{\rm if}~ k>0\\
~~~~~1~~~~~~~~~~{\rm if}~k=0
\end{cases}$$

In particular, if $e=u_{1}u_{2}\in E(G)$ and the vertices  $u_{1}, u_{2}$ has degrees $d_{_{u_{1}}}$ and $d_{_{u_{2}}}$, then
$$d_{G}(e)=d_{_{u_{1}}} d_{_{u_{2}}}.$$
When the graph $G$ is understood from the context, we also omit the subscript $G$ and simply write  $d(\mu)$ and $d(e)$ for $d_{G}(\mu)$ and $d_{G}(e)$, respectively.
Let $\mathscr{G}_{k}(G)$ denote the set of all $k$-matchings of $G$. We define $\pi_{k}(G)$ by
$$\pi_{k}(G)=\sum\limits_{\mu\in\mathscr{G}_{k}(G)}\frac{1}{d(\mu)}.$$
\begin{lemma}(\cite{bru})
Let $T$ be a tree, then
\begin{eqnarray*}
\pi(T)=\sum\limits_{k=0}\limits^{\lfloor\frac{n}{2} \rfloor}\pi_{k}(T)
\end{eqnarray*}
\end{lemma}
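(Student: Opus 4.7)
The plan is to start from the defining formula
$${\rm per}\,L(T)=\sum_{\sigma\in \Lambda_n}\prod_{i=1}^{n} L_{i,\sigma(i)}$$
and determine which permutations $\sigma$ give a nonzero term when $T$ is a tree, writing $L=L(T)$ for short. Recall that $L_{i,i}=d_i$, while for $i\neq j$ one has $L_{i,j}=-1$ if $v_iv_j\in E(T)$ and $L_{i,j}=0$ otherwise.

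First, I would decompose each $\sigma$ into disjoint cyclic permutations. For $\prod_i L_{i,\sigma(i)}$ to be nonzero, every non-fixed cycle of $\sigma$ must correspond to an actual cycle in $T$. Since $T$ is acyclic, this forces every non-fixed cycle of $\sigma$ to have length exactly $2$. Consequently, the permutations contributing nonzero terms are in bijection with the matchings $\mu$ of $T$: the transpositions of $\sigma$ are precisely the edges of $\mu$, and all remaining points are fixed.

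Next, I would evaluate the contribution of the permutation $\sigma_\mu$ associated with a matching $\mu$. Each transposition $(i,j)$ contributes $L_{i,j}L_{j,i}=(-1)(-1)=1$, and each fixed point $i$ contributes $L_{i,i}=d_i$, so the contribution of $\sigma_\mu$ equals $\prod_{v_i\notin V(\mu)}d_i$. Grouping matchings by size gives
$${\rm per}\,L(T)=\sum_{k=0}^{\lfloor n/2\rfloor}\sum_{\mu\in \mathscr{G}_k(T)}\prod_{v_i\notin V(\mu)} d_i,$$
where the $k=0$ term (the empty matching) contributes $\prod_{i=1}^n d_i$. Dividing by $PD(T)=\prod_{i=1}^n d_i$ turns each inner summand into $1/d(\mu)$, yielding the claimed identity $\pi(T)=\sum_{k=0}^{\lfloor n/2\rfloor}\pi_k(T)$.

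The only step requiring care is the structural claim that cycles of length $\geq 3$ in $\sigma$ cannot contribute; this uses the acyclicity of $T$ and is already implicit in Lemma 2.2, whose third summand $\sum_{C\in \mathscr{C}_G(v)}(-1)^{|V(C)|}\,{\rm per}\,L_{V(C)}(G)$ vanishes when $G$ is a tree. As a sanity check, one can alternatively derive the identity by induction on $n$, peeling a pendant vertex $v$ with neighbor $u$ via Lemma 2.3 and matching the two terms ${\rm per}\,L(T-v)$ and $2\,{\rm per}\,L_u(T-v)$ against the contributions from matchings of $T$ that respectively avoid and use the edge $uv$; but the direct permanent expansion is cleaner and avoids bookkeeping on the degree shift at $u$.
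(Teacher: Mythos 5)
Your proof is correct: for a tree the only permutations surviving in the permanent expansion of $L(T)$ are products of fixed points and transpositions along edges, these correspond bijectively to matchings, and dividing by $PD(T)$ gives exactly $\sum_{k}\pi_k(T)$, with the empty matching accounting for $\pi_0(T)=1$. The paper itself offers no proof of this lemma (it is quoted from Brualdi and Goldwasser), and your permutation-expansion argument is precisely the standard one used there, so there is nothing to flag.
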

\begin{lemma}(\cite{bru})
Let $T$ be a tree with diameter $k$. Then $\pi(T)\geq\pi(P_{k+1})$ with equality holding  if and only if $T$ is the path $P_{k+1}$, or when $k=2$, $T$ is the star $S_{n}$.
\end{lemma}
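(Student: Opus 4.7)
The plan is to induct on $n=|V(T)|$. In the base case $n=k+1$ the unique tree on $n$ vertices with diameter $k$ is $P_{k+1}$, so the claim is trivial. For the inductive step $n>k+1$, fix any diameter path $P=v_0v_1\cdots v_k$. The endpoints $v_0,v_k$ must be leaves of $T$ (else $P$ could be extended), and since $n>k+1$ the tree $T$ is not a path and therefore has at least three leaves, so some leaf $u$ lies outside $V(P)$. Deleting the leaf $u$ does not change any distance in $T-u$, so $T-u$ is again a tree of diameter exactly $k$ on $n-1$ vertices, and the inductive hypothesis gives $\pi(T-u)\geq\pi(P_{k+1})$. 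The lemma therefore reduces to the single-leaf inequality $\pi(T)\geq\pi(T-u)$, with strict inequality unless $T$ is a star (so $k=2$).

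I would prove this comparison by invoking Lemma 2.6 to expand both $\pi(T)$ and $\pi(T-u)$ as weighted sums over matchings, taking care that the weight of the vertex $v=N(u)$ changes from $1/d$ in the $T$-sum to $1/(d-1)$ in the $(T-u)$-sum, where $d=d_T(v)$. With
\[
A=\sum_{\mu\subseteq T-u}\prod_{w\in V(\mu)}\frac{1}{d_T(w)},\qquad B=\sum_{\mu\subseteq T-u-v}\prod_{w\in V(\mu)}\frac{1}{d_T(w)},
\]
\[
B_{v'}=\sum_{\mu\subseteq T-u-v-v'}\prod_{w\in V(\mu)}\frac{1}{d_T(w)},\qquad C=\sum_{v'\in N(v)\setminus\{u\}}\frac{B_{v'}}{d_T(v')},
\]
partitioning matchings according to whether $u$ (respectively $v$) is covered yields the three identities
\[
\pi(T)=A+\frac{B}{d},\qquad A=B+\frac{C}{d},\qquad \pi(T-u)=B+\frac{C}{d-1},
\]
and combining them produces the clean formula
\[
\pi(T)-\pi(T-u)=\frac{(d-1)B-C}{d(d-1)}.
\]

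To finish, I bound $C$. Matchings of $T-u-v$ that avoid $v'$ are exactly matchings of $T-u-v-v'$, with identical weights (no $1/d_T(v')$ factor enters either sum), so $B_{v'}\leq B$; together with $d_T(v')\geq 1$ and the fact that there are $d-1$ choices of $v'$, this gives $C\leq(d-1)B$ and hence $\pi(T)\geq\pi(T-u)$. Equality forces $B_{v'}=B$ and $d_T(v')=1$ for every $v'\in N(v)\setminus\{u\}$, and both conditions are equivalent to $v'$ being a leaf of $T$; by connectedness $T$ then has to be the star $S_n$, which has diameter $2$. Unwinding the induction, equality in $\pi(T)\geq\pi(P_{k+1})$ occurs precisely at $T=P_{k+1}$, or at $T=S_n$ when $k=2$.

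The main technical hurdle is the careful two-weight bookkeeping that produces the identity for $\pi(T)-\pi(T-u)$; once that identity is in hand, the inequality $C\leq(d-1)B$ is an elementary term-by-term bound, and the equality characterization falls out of tracking when each bound is tight.
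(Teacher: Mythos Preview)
The paper does not prove this lemma; it is quoted from \cite{bru} (Brualdi--Goldwasser) without argument. So there is no ``paper's own proof'' to compare against.

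Your proof is correct and self-contained. The induction on $n$ with the leaf-deletion reduction is natural, and the heart of the argument---the identity
\[
\pi(T)-\pi(T-u)=\frac{(d-1)B-C}{d(d-1)}
\]
---is derived cleanly from the matching expansion of Lemma~2.6 by partitioning matchings according to whether they cover $u$ and then whether they cover $v$. The bound $C\le(d-1)B$ follows term by term from $B_{v'}\le B$ and $d_T(v')\ge 1$, and your equality analysis (forcing every $v'\in N(v)\setminus\{u\}$ to be a leaf, hence $T=S_n$ and $k=2$) is accurate; note in particular that $B\ge 1>0$ because the empty matching contributes~$1$, so the strictness when $d_T(v')\ge 2$ is genuine. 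The only small points worth making explicit in a write-up are that $d=d_T(v)\ge 2$ (so the denominator $d-1$ is nonzero) and that removing a leaf off the diameter path preserves the diameter exactly---both of which you address. This gives an elementary proof independent of \cite{bru}.
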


For an integer $r\geq1$, let $T_{r}$ denote a tree of order $r(r\geq1)$. Suppose that $H$ is a tree disjoint from $T_{r}$
Assume that $u\in V(H)$ and $v\in V(T_{r})$. Let  $G_{1}$ be the graph obtained from the union of $H$ and $T_{r}$ by  linking $u$ and $v$ with a new edge, and $G_{2}$ be the graph obtained from $H$  by attaching $r$ pendent edges to $u$.
Suppose in $G_{2}$, the edges incident  with $u$  in $H$ are labelled $h_{1}, h_{2}, \cdots, h_{s}$, and other $r$ pendent edges incident with
 $u$  in $G_{2}$ are labelled $e_{1}, e_{2}, \cdots, e_{r}$.  Suppose in $G_{1}$, the edges in $T_{r}$ are labelled $e_{2}^{'}, e_{3}^{'}, \cdots, e_{r}^{'}$, and  $e_{1}^{'}=uv$. See Figure 1 for illustrations of $G_{1}$ and $G_{2}$.   By the definitions of $G_{1}$ and $G_{2}$, we have  $d_{G_{2}}(u)=d_{G_{1}}(u)+r-1$.
Let $\mathscr{G}_{k}(G_{2})$  denote the set of all $k$-matchings of $G_{2}$, let $\mathscr{G}_{k,u}^{+}(G_{2})$  be the set of all $k$-matchings  covering $u$, and  $\mathscr{G}^{-}_{k,u}(G_{2})$ be the set of all $k$-matchings not covering $u$. By these definitions, $\mathscr{G}_{k,u}^{+}(G_{2})\cup\mathscr{G}^{-}_{k,u}(G_{2})=\mathscr{G}_{k}(G_{2})$.
Denote by $\mathscr{G}_{k,u}^{++}(G_{2})$ the set of all $k$-matching in $\mathscr{G}_{k,u}^{+}(G_{2})$ which covers exactly one edge of $\{e_{1}, e_{2}, \cdots, e_{r}\}$, and denote by $\mathscr{G}_{k,u}^{+-}(G_{2})$ the set of all k-matching in $\mathscr{G}_{k,u}^{+}(G_{2})$ which covers exactly one edge of $\{h_{1}, h_{2}, \cdots, h_{s}\}$.  By their definitions, $\mathscr{G}_{k,u}^{++}(G_{2})\cup\mathscr{G}_{k,u}^{+-}(G_{2})=\mathscr{G}_{k,u}^{+}(G_{2})$.
$\mathscr{G}_{k}(G_{1})$  denotes the set of all $k$-matchings of $G_{1}$.  Denote by $\mathscr{G}_{k}^{+}(G_{1})$  the set of all $k$-matchings in $\mathscr{G}_{k}(G_{1})$ which  covers exactly one edge of $\{h_{1}, h_{2}, \cdots, h_{s}, e_{1}^{'}, e_{2}^{'}, \cdots, e_{r}^{'}\}$. Let $\mathscr{G}^{-}_{k}(G_{1})$ be the set of all $k$-matchings uncovering $u$ in $H$, and $\mathscr{G}^{*}_{k}(G_{1})=\mathscr{G}_{k}(G_{1})-\mathscr{G}_{k}^{+}(G_{1})-\mathscr{G}^{-}_{k}(G_{1})$. Denote by $\mathscr{G}_{k}^{++}(G_{1})$ the set of all k-matching in $\mathscr{G}_{k}^{+}(G_{1})$ which covers one edge of $\{e_{1}^{'}, e_{2}^{'}, \cdots, e_{r}^{'}\}$, and denote by $\mathscr{G}_{k}^{+-}(G_{1})$ the set of all k-matching in $\mathscr{G}_{k}^{+}(G_{1})$ which covers one edge of $\{h_{1}, h_{2}, \cdots, h_{s}\}$. By their definitions, $\mathscr{G}_{k}^{++}(G_{2})\cup\mathscr{G}^{+-}_{k}(G_{2})=\mathscr{G}_{k}^{+}(G_{2})$.
\begin{figure}[htbp]
\begin{center}
\includegraphics[scale=0.5]{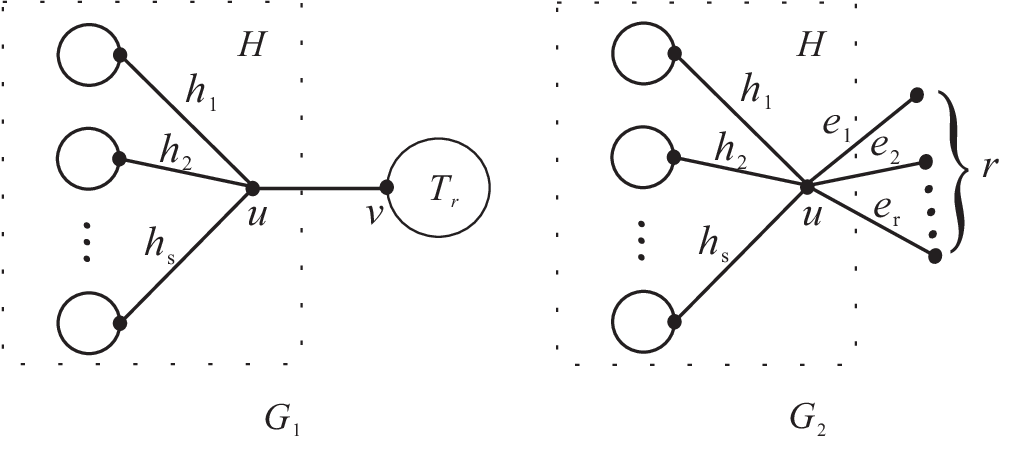}
\caption{\label{fig2}\small Graphs $G_{1}$ and $G_{2}$. }
\end{center}
\end{figure}

Define
\begin{eqnarray}\label{equ29}
\psi=\frac{1}{d(e_{1}^{'})}+\frac{1}{d(e_{2}^{'})}+\cdots+\frac{1}{d(e_{r}^{'})}~{\rm and }~ \theta=\frac{1}{d(e_{1})}+\frac{1}{d(e_{2})}+\cdots+\frac{1}{d(e_{r})}.
\end{eqnarray}
\begin{lemma}
Let $G_{1}$ and $G_{2}$ be two graphs defined as above. Then

(i) If $\psi\geq\theta$, then $\pi(G_{1})\geq \pi(G_{2})$.

(ii) If ~$T_{r}$ is a star and $v$ is the center  of $T_{r}$, then $\pi(G_{1})\geq\pi(G_{2})$.

(iii) $\pi(G_{1})\geq\pi(G_{2})$.
\end{lemma}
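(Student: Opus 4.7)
The plan is to handle the three parts in order, using Lemma 2.6 throughout to reduce to the comparison of the $k$-matching weights $\pi_k(G_1)$ and $\pi_k(G_2)$ for each $k\ge 0$.

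For part (i), I would partition $\mathscr{G}_k(G_1)=\mathscr{G}^{-}_k(G_1)\cup\mathscr{G}^{+-}_k(G_1)\cup\mathscr{G}^{++}_k(G_1)\cup\mathscr{G}^{*}_k(G_1)$ and $\mathscr{G}_k(G_2)=\mathscr{G}^{-}_{k,u}(G_2)\cup\mathscr{G}^{+-}_{k,u}(G_2)\cup\mathscr{G}^{++}_{k,u}(G_2)$ and match them class by class. The classes $\mathscr{G}^{-}_k(G_1)$ and $\mathscr{G}^{-}_{k,u}(G_2)$ both correspond to $k$-matchings of $H-u$; since for every $w\in V(H)\setminus\{u\}$ one has $d_{G_1}(w)=d_H(w)=d_{G_2}(w)$, their contributions agree. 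The classes $\mathscr{G}^{+-}_k(G_1)$ and $\mathscr{G}^{+-}_{k,u}(G_2)$ both consist of matchings of the form $\{h_i\}\cup\mu'$ with $\mu'\in\mathscr{G}_{k-1}(H-u-v_i)$, and the ratio of the $G_1$- to $G_2$-weights is $d_{G_2}(u)/d_{G_1}(u)=(d_H(u)+r)/(d_H(u)+1)\ge 1$, so the $G_1$-side dominates. The classes $\mathscr{G}^{++}_k(G_1)$ and $\mathscr{G}^{++}_{k,u}(G_2)$ expand, using (\ref{equ29}), to
\[
\sum_{\mu\in\mathscr{G}^{++}_k(G_1)}\frac{1}{d(\mu)}=\psi\cdot\Pi_{k-1},\qquad \sum_{\mu\in\mathscr{G}^{++}_{k,u}(G_2)}\frac{1}{d(\mu)}=\theta\cdot\Pi_{k-1},
\]
where $\Pi_{k-1}:=\sum_{\mu'\in\mathscr{G}_{k-1}(H-u)}1/d_H(\mu')$, so the hypothesis $\psi\ge\theta$ gives dominance here too. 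The surplus class $\mathscr{G}^{*}_k(G_1)$, which has no counterpart in $G_2$, contributes a further nonnegative amount. Summing over $k$ yields $\pi(G_1)\ge\pi(G_2)$.

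For part (ii), by (i) it suffices to verify $\psi\ge\theta$ when $T_r$ is a star with center $v$. Setting $a:=d_H(u)$, one has $d_{G_1}(e_1')=r(a+1)$ and $d_{G_1}(e_i')=r$ for $2\le i\le r$, so $\psi=\frac{r+(r-1)a}{r(a+1)}$ and $\theta=\frac{r}{a+r}$. Subtracting and simplifying gives $\psi-\theta=\frac{(r-1)a^2}{r(a+1)(a+r)}\ge 0$, completing the check.

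For part (iii), the plan is to reduce to (ii) by an iterated local star-collapse. Root $T_r$ at $v$ and induct on $N(T_r):=|\{x\in V(T_r)\setminus\{v\}:x\text{ is not a leaf of }T_r\}|$. If $N(T_r)=0$, then $T_r$ is a star centered at $v$ and (ii) concludes. Otherwise choose a non-leaf $x\in V(T_r)\setminus\{v\}$ of maximum depth; its non-parent neighbors $y_1,\dots,y_\ell$ are then all leaves, and $\{x,y_1,\dots,y_\ell\}$ spans a star centered at $x$. Apply (ii) to the local decomposition with $\tilde u:=p$ (the parent of $x$ in $T_r$), $\tilde v:=x$, $\tilde T:=\{x,y_1,\dots,y_\ell\}$, and $\tilde H:=G_1-\{x,y_1,\dots,y_\ell\}$: the resulting graph $G_1'$, obtained from $G_1$ by deleting $\{x,y_1,\dots,y_\ell\}$ and attaching $\ell+1$ pendants at $p$, satisfies $\pi(G_1)\ge\pi(G_1')$. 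The attached tree of $G_1'$ has order $r$ but strictly smaller $N$-value, so by induction $\pi(G_1')\ge\pi(G_2)$, proving (iii).

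The main obstacle is the careful bookkeeping of part (i): the five matching classes must be tracked simultaneously, and the two ``deficits'' on the $G_2$-side (the degree-of-$u$ discrepancy $r-1$ and any gap between $\psi$ and $\theta$) must be shown to be absorbed by the corresponding $G_1$-side surpluses (namely $\mathscr{G}^{*}_k(G_1)$, the supplementary weights in the $\mathscr{G}^{+-}$ comparison, and the hypothesis $\psi\ge\theta$) uniformly for every $k$.
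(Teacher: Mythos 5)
Your proposal is correct, and parts (i)--(ii) essentially follow the paper's own route: the paper proves (i) by the same matching-class decomposition (your version is tighter --- you account explicitly for the class $\mathscr{G}^{*}_{k}(G_{1})$ and for the degree-of-$u$ discrepancy in the $\mathscr{G}^{+-}$ comparison, where the paper's write-up has typos and a looser justification), and it proves (ii) by reducing to (i) via a $\psi\geq\theta$ check, though it does so by a case analysis on $d_{u}\in\{1,2\}$ and $d_{u}\geq3$ rather than your single closed-form identity $\psi-\theta=\frac{(r-1)a^{2}}{r(a+1)(a+r)}$ with $a=d_{H}(u)$, which is cleaner and also pins down exactly when equality of $\psi$ and $\theta$ occurs. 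Part (iii) is where you genuinely diverge. The paper splits into the case where $T_{r}$ hangs off $u$ as a path (direct $\psi\geq\theta$ computation) and the case where $T_{r}$ has a branch vertex $w_{i}$ of degree at least $3$; in the latter it builds an intermediate tree $G_{1}'$ by collapsing the subtree beyond $w_{i-1}$ into pendants, proves $\pi(G_{1})>\pi(G_{1}')$ by permanent expansions (Lemmas 2.3 and 2.5) combined with the bounds ${\rm per}\,L(H')>(d_{w_{i-1}}-1)\,{\rm per}\,L_{w_{i-1}}(H')$ and $\pi(T_{r-i})\geq3$ from Lemmas 2.1 and 2.7, and then verifies $\psi\geq\theta$ for $G_{1}'$ versus $G_{2}$ by a further case analysis on $i$. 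Your argument instead iterates (ii): induct on the number of internal vertices of $T_{r}$ other than $v$, and at each step collapse the deepest star (a branch vertex $x$ whose children are all leaves of $G_{1}$) into pendants at its parent, which is a legitimate application of (ii) since the relevant induced configuration is exactly a star centered at $x$ attached to the tree $G_{1}-\{x,y_{1},\dots,y_{\ell}\}$, and the attached tree keeps order $r$ while its count of internal non-root vertices drops by one. This buys a shorter, purely combinatorial proof that avoids all the permanent-expansion machinery and the delicate numerator estimate in the paper's Case 2; what it gives up is the explicit strict inequality $\pi(G_{1})>\pi(G_{1}')$ that the paper's computation produces, which is immaterial for the lemma as stated (only $\geq$ is claimed) but is the kind of strictness one would want when tracking the equality case later. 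As stated, your plan has no gaps I can find; the only point to spell out in a full write-up is the bookkeeping in (i) that the leftover class $\mathscr{G}^{*}_{k}(G_{1})$ and the $\mathscr{G}^{+-}$ surplus are simply discarded as nonnegative, which you already flag.
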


\begin{proof} \textbf{(i)}. If $r=1$ in $G_{1}$ and $G_{2}$, then $G_{1}\cong G_{2}$. Therefore $\pi(G_{1})=\pi(G_{2})$. Now, set $r\geq2$. By the definitions of $\pi_{k}(G_{1})$ and $\pi_{k}(G_{2})$, we have $\pi_{k}(G_{2})=\sum\limits_{\mu\in\mathscr{G}^{-}_{k,u}(G_{2})}\frac{1}{d(\mu)}+\sum\limits_{\mu\in\mathscr{G}^{+}_{k,u}(G_{2})}\frac{1}{d(\mu)}$ and $\pi_{k}(G_{1})\geq\sum\limits_{\mu\in\mathscr{G}^{-}_{k}(G_{1})}\frac{1}{d(\mu)}+\sum\limits_{\mu\in\mathscr{G}^{+}_{k}(G_{1})}\frac{1}{d(\mu)}$.
Checking the structure of a $k$-matching in $\mathscr{G}^{++}_{k,u}(G_{2})$, since every $k$-matching in $\mathscr{G}^{++}_{k,u}(G_{2})$ contains exactly one edge of $\{e_{1}, e_{2}, \cdots, e_{r}\}$ and a $(k-1)$-matching in $\mathscr{G}^{-}_{k,u}(G_{2})$,  we have $\sum\limits_{\mu\in\mathscr{G}^{++}_{k,u}(G_{2})}\frac{1}{d(\mu)}=\sum\limits_{\mu\in\mathscr{G}^{-}_{k-1,u}(G_{2})}\frac{1}{d(\mu)}\theta$. Similarly, by the definition of $G_{1}$ and the distributions of a $k$-matching in $\mathscr{G}^{++}_{k}(G_{1})$, we observe that every $k$-matching in $\mathscr{G}^{++}_{k}(G_{1})$ contains exactly one edge of $\{e_{1}^{'}, e_{2}^{'}, \cdots, e_{r}^{'}\}$ and a $(k-1)$-matching in $\mathscr{G}^{-}_{k}(G_{1})$. This yields $\sum\limits_{\mu\in\mathscr{G}^{++}_{k}(G_{1})}\frac{1}{d(\mu)}=\sum\limits_{\mu\in\mathscr{G}^{-}_{k-1}(G_{1})}\frac{1}{d(\mu)}\psi$. Hence,
\begin{eqnarray*}
\pi_{k}(G_{2})&=&\sum\limits_{\mu\in\mathscr{G}^{-}_{k,u}(G_{2})}\frac{1}{d(\mu)}+\sum\limits_{\mu\in\mathscr{G}^{-}_{k,u}(G_{2})}\frac{1}{d(\mu)}\\
&=&\sum\limits_{\mu\in\mathscr{G}^{-}_{k,u}(G_{2})}\frac{1}{d(\mu)}+\sum\limits_{\mu\in\mathscr{G}^{++}_{k,u}(G_{2})}\frac{1}{d(\mu)}+\sum\limits_{\mu\in\mathscr{G}^{+-}_{k,u}(G_{2})}\frac{1}{d(\mu)}\\
&=&\sum\limits_{\mu\in\mathscr{G}^{-}_{k,u}(G_{2})}\frac{1}{d(\mu)}+\sum\limits_{\mu\in\mathscr{G}^{-}_{k-1,u}(G_{2})}\frac{1}{d(\mu)}\theta+\sum\limits_{\mu\in\mathscr{G}^{++}_{k,u}(G_{2})}\frac{1}{d(\mu)},
\end{eqnarray*}
\begin{eqnarray*}
\pi_{k}(G_{1})&\geq&\sum\limits_{\mu\in\mathscr{G}^{-}_{k}(G_{1})}\frac{1}{d(\mu)}+\sum\limits_{\mu\in\mathscr{G}^{+}_{k}(G_{1})}\frac{1}{d(\mu)}\\
&=&\sum\limits_{\mu\in\mathscr{G}^{-}_{k}(G_{1})}\frac{1}{d(\mu)}+\sum\limits_{\mu\in\mathscr{G}^{++}_{k}(G_{1})}\frac{1}{d(\mu)}+\sum\limits_{\mu\in\mathscr{G}^{+-}_{k}(G_{1})}\frac{1}{d(\mu)}\\
&=&\sum\limits_{\mu\in\mathscr{G}^{-}_{k}(G_{1})}\frac{1}{d(\mu)}+\sum\limits_{\mu\in\mathscr{G}^{-}_{k-1}(G_{1})}\frac{1}{d(\mu)}\psi+\sum\limits_{\mu\in\mathscr{G}^{+-}_{k}(G_{1})}\frac{1}{d(\mu)}.
\end{eqnarray*}
Checking the structure of  $G_{1}$ and $G_{2}$, we know that $\mathscr{G}^{-}_{k,u}(G_{2})=\mathscr{G}^{-}_{k}(G_{1})$. Therefore
\begin{eqnarray*}\label{equ30}
\sum\limits_{\mu\in\mathscr{G}^{-}_{k,u}(G_{2})}\frac{1}{d(\mu)}=\sum\limits_{\mu\in\mathscr{G}^{-}_{k}(G_{1})}\frac{1}{d(\mu)}~{\rm and }~ \sum\limits_{\mu\in\mathscr{G}^{-}_{k-1,u}(G_{2})}\frac{1}{d(\mu)}=\sum\limits_{\mu\in\mathscr{G}^{-}_{k-1}(G_{1})}\frac{1}{d(\mu)}.
\end{eqnarray*}

Since $\psi\geq\theta$, we obtain
$\sum\limits_{\mu\in\mathscr{G}^{-}_{k}(G_{1})}\frac{1}{d(\mu)}\psi\geq\sum\limits_{\mu\in\mathscr{G}^{-}_{k-1,u}(G_{2})}\frac{1}{d(\mu)}\theta$.

By the definitions of $G_{1}$ and $G_{2}$, since $r>1$, we have $d_{G_{1}}(u)<d_{G_{2}}(u)$. Therefore, we also have $d_{G_{1}}(h_{i})<d_{G_{2}}(h_{i})(1\leq i\leq s)$. This implies that  $\sum\limits_{\mu\in\mathscr{G}^{+-}_{k}(G_{1})}\frac{1}{d(\mu)}>\sum\limits_{\mu\in\mathscr{G}^{+-}_{k,u}(G_{2})}\frac{1}{d(\mu)}$.

Combining the arguments above,  we conduce that $\pi_{k}(G_{1})\geq\pi_{k}(G_{2})$. By Lemma 2.6, we obtain  $\pi(G_{1})\geq\pi(G_{2})$.

\textbf{(ii)}. If $r=1$ in $G_{1}$ and $G_{2}$, then $G_{1}\cong G_{2}$. Therefore $\pi(G_{1})=\pi(G_{2})$. Now, set $r\geq2$.
Checking the structures of $G_{1}$ and $G_{2}$, set $T_{r}$ is a star, and $v$ is the center  of $T_{r}$ in $G_{1}$.  By (\ref{equ29}), we have $\psi=\frac{1}{d_{u}r}+\frac{r-1}{r}~{\rm and }~ \theta=\frac{r}{d_{u}+r-1}$. By the definitions  of $G_{1}$ and $G_{2}$, if $d_{u}=1$ in $G_{1}$, then $G_{1}\cong G_{2}$. This implies that $\pi(G_{1})=\pi(G_{2})$. If $d_{u}=2$ in $G_{1}$, then $\psi-\theta=\frac{1}{2r}+\frac{r-1}{r}-\frac{r}{2+r-1}
>\frac{r-1}{2r(r+1)}\geq0$. If $d_{u}\geq 3$ in $G_{1}$, then $\psi-\theta>\frac{r-1}{r}-\frac{r}{3+r-1}
>\frac{r-2}{r(r+2)}\geq0$. By (i) of Lemma 2.8, we have $\pi(G_{1})\geq\pi(G_{2})$.

\textbf{(iii)}. If $r=1$ in  $G_{1}$. By the definitions of $G_{1}$ and $G_{2}$, it can be known that $G_{1}\cong G_{2}$. Therefore $\pi(G_{1})=\pi(G_{2})$. Now, set $r\geq2$.  We  consider two cases as follows.

\textbf{Case 1.} Suppose that the induced subgraph with vertices  $V(T_{r})\cup\{u\}$ in $G_{1}$  is   isomorphic to  $P_{r}$. By (\ref{equ29}), we have $\psi=\frac{1}{2d_{u}}+\frac{r-2}{4}+\frac{1}{2}~{\rm and }~\theta=\frac{r}{d_{u}+r-1}$. Since $r\geq 2$ and $d_{u}\geq1$, $\psi-\theta=\frac{1}{2d_{u}}+\frac{r}{4}-\frac{r}{d_{u}+r-1}=\frac{d_{u}(r^{2}+d_{u}r+2-5r)+2r-2}{4d_{u}(d_{u}+r-1)}
\geq\frac{r(r-2)}{4d_{u}(d_{u}+r-1)}\geq0.$
By (i) of Lemma 2.8, we have $\pi(G_{1})\geq\pi(G_{2})$.

\textbf{Case 2.} Suppose that the induced subgraph of the vertices of $V(T_{r})\cup\{u\}$ in $G_{1}$ is  not isomorphic to $P_{r}$.  The structure of $G_{1}$ in this case is displayed on Figure 2. There exists the longest $v, w$-path  in $T_{r}$ and the vertices of the $v,w$-path are labeling $v(w_{0})$, $w_{1}, w_{2}, \cdots, w_{m}$, $w$ and their degrees in $G_{1}$ is $d_{v}, d_{w_{1}}, d_{w_{2}}, \cdots, d_{w_{m}}, d_{w}$, respectively.  By the definition of $G_{1}$ as illustrated in Figure 2, we observe that $G_{1}$ contains a  vertex $w_{i}$ with $i\geq0$ and $d_{w_{i}}\geq3$, and with $d(v,w_{i})$, the length of a shortest $v, w$-path in $G$, being minimized
Let $V(T_{r-i+1})= V(T_{r})-\{w_{1}, w_{2}, \cdots, w_{i-1}\}$,  and $H^{'}$ is $G[V(H)\cup\{v, w_{1}, w_{2}, \cdots, w_{i-1}\}]$. Let $G_{1}^{'}$ be a tree obtained from $H^{'}$ by attaching $r-i$ pendent vertices to $w_{i-1}$. See Figure 2 for the structures of $H^{'}$. Direct computation yields that $PD(G_{1})=\frac{PD(H^{'})}{d_{w_{i-1}}-1}\times d_{w_{i-1}}\times\frac{d_{w_{i}}PD(T_{r-i})}{d_{w_{i}}-1}$. By the definitions of $G_{1}$ and $G_{1}^{'}$, it is easy to obtain that $d_{G_{1}^{'}}(w_{i-1})=d_{w_{i-1}}+r-i$.
\begin{figure}[htbp]
\begin{center}
\includegraphics[scale=0.7]{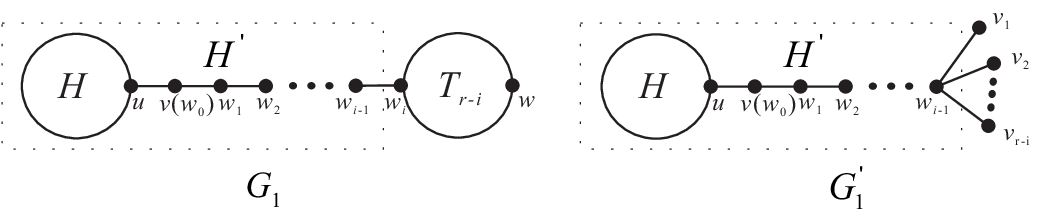}
\caption{\label{fig3}\small Graphs $G_{1}$ and $G_{1}^{'}$. }
\end{center}
\end{figure}
With these  definitions and notion, we will prove $\pi(G_{1})>\pi(G_{1}^{'})$ and $\pi(G_{1}^{'})\geq\pi(G_{2})$. To prove $\pi(G_{1})>\pi(G_{1}^{'})$, we consider different values of the diameter of $T_{r}$.

If the diameter of $T_{r-i}$ is $2$. Then $G_{1}\cong G_{1}^{'}$,  $\pi(G_{1})=\pi(G_{1}^{'})$.

Assume that the diameter of  $T_{r-i}$ is $3$, then $T_{r-i}$ is a double-star. Assume $T_{r-i}=DS(p,q)$ and $p+q=r-i$. Suppose that the vertices $c_{1}, c_{2}$  is two centers of $DS(p,q)$ and the degree sequence of  $DS(p,q)$ is $p,q,1,\cdots,1$. Since $d_{w_{i}}\geq3$, $w_{i}$ is $c_{1}$ or $c_{2}$. Without loss of generality, set $w_{i}$ is $c_{1}$ (or $c_{2}$). $G_{1}^{*}$  denotes the tree obtained by deleting pendent vertices of $c_{2}$ (or $c_{1}$) in $G_{1}$ and attaching simultaneously these pendent vertices to $c_{1}$ (or $c_{2}$). By (ii) of Lemma 2.8, we have $\pi(G_{1})>\pi(G_{1}^{*})$. By (ii) of Lemma 2.8 again, we have $\pi(G_{1}^{*})>\pi(G_{2})$. Therefore $\pi(G_{1})>\pi(G_{1}^{'})$.

Suppose that  the diameter of  $T_{r-i}$ is at least $4$. By Lemma 2.5, we have
\begin{eqnarray*}
{\rm per}~L(G_{1})
&=&{\rm per} L(G_{1}-w_{i-1}w_{i})+{\rm per} L_{w_{i-1}}(G_{1}-w_{i-1}w_{i})\\
&&+{\rm per} L_{w_{i}}(G_{1}-w_{i-1}w_{i})+{\rm per} L_{w_{i-1}w_{i}}(G_{1})\\
&=&{\rm per} L(H^{'})~{\rm per} L(T_{r-i})+{\rm per} L_{w_{i-1}}(H^{'})~{\rm per} L(T_{r-i})\\
&&+{\rm per} L(H^{'})~{\rm per} L_{w_{i}}(T_{r-i})+2~{\rm per} L_{w_{i-1}}(H^{'})~{\rm per} L_{w_{i}}(T_{r-i})\\
&=&[{\rm per}~L(T_{r-i})+{\rm per}~L_{w_{i}}(T_{r-i})]{\rm per}~L(H^{'})\\
&&+[{\rm per}~L(T_{r-i})+2~{\rm per}~L_{w_{i}}(T_{r-i})]{\rm per}~L_{w_{i-1}}(H^{'}).
\end{eqnarray*}
By the definitions  of $G_{1}^{'}$ and $H^{'}$, we have ${\rm per}~L_{v_{1}w_{i-1}}(G_{1}^{'})={\rm per}~L_{w_{i-1}}(H^{'})$.  By Lemma 2.3, and by deleting $v_{1}, v_{2}, \cdots, v_{r-i}$ one by one in $G_{1}^{'}$, we obtain that
\begin{eqnarray*}
&&{\rm per}~L(G_{1}^{'})\\
&=&{\rm per}~L(G_{1}^{'}-v_{1})+2~{\rm per}~L_{v_{1}w_{i-1}}(G_{1}^{'})\\
&=&{\rm per}~L(G_{1}^{'}-v_{1})+2~{\rm per}~L_{w_{i-1}}(H^{'})\\
&=&{\rm per}~L(G_{1}^{'}-v_{1}-v_{2})+2~{\rm per}~L_{w_{i-1}}(H^{'})+2~{\rm per}~L_{w_{i-1}}(H^{'})\\
&=&{\rm per}~L(G_{1}^{'}-v_{1}-v_{2}-v_{3})+2~{\rm per}~L_{w_{i-1}}(H^{'})+2~{\rm per}~L_{w_{i-1}}(H^{'})+2~{\rm per}~L_{w_{i-1}}(H^{'})\\
&&\cdots\\
&=&{\rm per}~L(G_{1}^{'}-v_{1}-v_{2}-\cdots-v_{r-i-1}-v_{r-i})+2(r-i)~{\rm per}~L_{w_{i-1}}(H^{'})\\
&=&{\rm per}~L(H^{'})+2(r-i)~{\rm per}~L_{w_{i-1}}(H^{'}).
\end{eqnarray*}
By Definition of Laplacian ratio, we obtain that
\begin{eqnarray}\label{equ315}
&&\pi(G_{1})-\pi(G_{1}^{'})\nonumber\\
&=&\frac{[{\rm per}~L(T_{r-i})+{\rm per}~L_{w_{i}}(T_{r-i})]{\rm per}~L(H^{'})+[{\rm per}~L(T_{r-i})}{\frac{PD(H^{'})}{d_{w_{i-1}}-1}\times d_{w_{i-1}} \times \frac{d_{w_{i}}}{d_{w_{i}}-1}PD(T_{r-i})}\nonumber\\
&&+\frac{2~{\rm per}~L_{w_{i}}(T_{r-i})]{\rm per}~L_{w_{i-1}}(H^{'})}{\frac{PD(H^{'})}{d_{w_{i-1}}-1}\times d_{w_{i-1}} \times \frac{d_{w_{i}}}{d_{w_{i}}-1}PD(T_{r-i})}-\frac{{\rm per}~L(H^{'})+2(r-i)~{\rm per}~L_{w_{i-1}}(H^{'})}{\frac{PD(H^{'})}{d_{w_{i-1}}-1}\times (d_{w_{i-1}}+r-i-1) \times 1^{r-i}}\nonumber\\
&=&\frac{(d_{w_{i-1}}+r-i-1)[{\rm per}~L(T_{r-i-1})+{\rm per}~L_{w_{i}}(T_{r-i})]{\rm per}~L(H^{'})}{\frac{PD(H^{'})}{d_{w_{i-1}}-1}\times d_{w_{i-1}} \times \frac{d_{w_{i}}}{d_{w_{i}}-1}PD(T_{r-i})\times (d_{w_{i-1}}+r-i-1)}\nonumber\\
&&+\frac{(d_{w_{i-1}}+r-i-1)[{\rm per}~L(T_{r-i})+2~{\rm per}~L_{w_{i}}(T_{r-i})]{\rm per}~L_{w_{i-1}}(H^{'})}{\frac{PD(H^{'})}{d_{w_{i-1}}-1}\times d_{w_{i-1}} \times \frac{d_{w_{i}}}{d_{w_{i}}-1}PD(T_{r-i})\times (d_{w_{i-1}}+r-i-1)}\nonumber\\
&&-\frac{d_{w_{i-1}}\times\frac{d_{w_{i}}}{d_{w_{i}}-1}PD(T_{r-i})[{\rm per}~L(H^{'})+2(r-i)~{\rm per}~L_{w_{i-1}}(H^{'})]}{\frac{PD(H^{'})}{d_{w_{i-1}}-1}\times d_{w_{i-1}} \times \frac{d_{w_{i}}}{d_{w_{i}}-1}PD(T_{r-i})\times (d_{w_{i-1}}+r-i-1)}\nonumber\\
&=&\frac{[(d_{w_{i-1}}+r-i-1)~{\rm per}~L(T_{r-i})+(d_{w_{i-1}}+r-i-1)~{\rm per}~L_{w_{i}}(T_{r-i})]{\rm per}~L(H^{'})}{\frac{PD(H^{'})}{d_{w_{i-1}}-1}\times d_{w_{i-1}} \times \frac{d_{w_{i}}}{d_{w_{i}}-1}PD(T_{r-i})\times (d_{w_{i-1}}+r-i-1)}\nonumber\\
&&-\frac{d_{w_{i-1}}\times\frac{d_{w_{i}}}{d_{w_{i}}-1}PD(T_{r-i}){\rm per}~L(H^{'})}{\frac{PD(H^{'})}{d_{w_{i-1}}-1}\times d_{w_{i-1}} \times\frac{d_{w_{i}}}{d_{w_{i}}-1}PD(T_{r-i})\times (d_{w_{i-1}}+r-i-1)}\\
&&+\frac{[(d_{w_{i-1}}+r-i-1)~{\rm per}~L(T_{r-i})+2~(d_{w_{i-1}}+r-i-1)~{\rm per}~L_{w_{i}}(T_{r-i})]{\rm per}~L_{w_{i-1}}(H^{'})}{\frac{PD(H^{'})}{d_{w_{i-1}}-1}\times d_{w_{i-1}} \times \frac{d_{w_{i}}}{d_{w_{i}}-1}PD(T_{r-i})\times (d_{w_{i-1}}+r-i-1)}\nonumber\\
&&-\frac{2(r-i)d_{w_{i-1}}\times\frac{d_{w_{i}}}{d_{w_{i}}-1}PD(T_{r-i}){\rm per}~L(H^{'})}{\frac{PD(H^{'})}{d_{w_{i-1}}-1}\times d_{w_{i-1}} \times \frac{d_{w_{i}}}{d_{w_{i}}-1}PD(T_{r-i})\times (d_{w_{i-1}}+r-1)}.\nonumber
\end{eqnarray}
Since $r\geq2$, $\frac{PD(H^{'})}{d_{w_{i-1}}-1}\times d_{w_{i-1}} \times \frac{d_{w_{i}}}{d_{w_{i}}-1}PD(T_{r-i})\times (d_{w_{i-1}}+r-i-1)>0$. By the definition of $T_{r-i}$ and utilizing  the structure of the matrix $L_{w_{i}}(T_{r-i})$, we obtain ${\rm per}~L_{w_{i}}(T_{r-i})\geq1$. By Lemma 2.1, we have ${\rm per}~L(H^{'})>(d_{w_{i-1}}-1){\rm per}~L_{w_{i-1}}(H^{'})$.
Since $T_{r-i}$ have diameter at least $4$ and $\pi(P_{5})=3$, by Lemma 2.7, we have $\pi(T_{r-i})\geq3$.  Since $d_{w_{i}}\geq3$, we know that ${\rm per}~L(T_{r-i})\geq3~PD(T_{r-i})\geq2~\frac{d_{w_{i}}}{d_{w_{i}}-1}PD(T_{r-i})$. Let $\Gamma$ denote the numerator of $\pi(G_{1})-\pi(G_{1}^{'})$, see (\ref{equ315}).
 Hence
\begin{eqnarray*}
\Gamma&=&[(d_{w_{i-1}}+r-i-1)~{\rm per}~L(T_{r-i})+(d_{w_{i-1}}+r-i-1)~{\rm per}~L_{w_{i}}(T_{r-i})\\
&&-d_{w_{i-1}} \times\frac{d_{w_{i}}}{d_{w_{i}}-1}PD(T_{r-i})]{\rm per}~L(H^{'})+[(d_{w_{i-1}}+r-i-1)~{\rm per}~L(T_{r-i})\\
&&+2~(d_{w_{i-1}}+r-i-1)~{\rm per}~L_{w_{i}}(T_{r-i})]{\rm per}~L_{w_{i-1}}(H^{'})\\
&&-2(r-i)d_{w_{i-1}} \times\frac{d_{w_{i}}}{d_{w_{i}}-1}PD(T_{r-i}){\rm per}~L_{w_{i-1}}(H^{'})\\
&>&[2(d_{w_{i-1}}+r-i-1) \times\frac{d_{w_{i}}}{d_{w_{i}}-1}PD(T_{r-i})+(d_{w_{i-1}}+r-i-1)\\
&&-d_{w_{i}-1} \times\frac{d_{w_{i}}}{d_{w_{i}}-1}PD(T_{r-i})](d_{w_{i-1}}-1){\rm per}~L_{w_{i-1}}(H^{'})\\
&&+[2(d_{w_{i-1}}+r-i-1) \times\frac{d_{w_{i}}}{d_{w_{i}}-1}PD(T_{r-i})+2~(d_{w_{i-1}}+r-i-1)\\
&&-2rd_{w_{i}-1} \times\frac{d_{w_{i}}}{d_{w_{i}}-1}PD(T_{r-i})]{\rm per}~L_{w_{i-1}}(H^{'})\\
&=&(d_{w_{i-1}}^{2}-d_{w_{i-1}}) \times\frac{d_{w_{i}}}{d_{w_{i}}-1}PD(T_{r-i})+(d_{w_{i-1}}+1)(d_{w_{i-1}}+r-i-1)\\
&&+(r-i) \times\frac{d_{w_{i}}}{d_{w_{i}}-1}PD(T_{r-i}).
\end{eqnarray*}
Since  $d_{w_{i-1}}\geq2$, $r>i$, we have $\pi(G_{1})-\pi(G_{1}^{'})>0$.

Combining arguments  above, it is easy to obtain that $\pi(G_{1})>\pi(G_{1}^{'})$.

We are now ready to prove $\pi(G_{1}^{'})\geq\pi(G_{2})$.

By the definitions of $G_{1}^{'}$,  by (\ref{equ29}), we have $\psi=\frac{1}{d_{u}d_{v}}+\frac{1}{d_{w_{0}}d_{w_{1}}}+\frac{1}{d_{w_{1}}d_{w_{2}}}+\cdots+\frac{1}{d_{w_{i-2}}d_{w_{i-1}}}
+\frac{1}{d_{w_{i-1}}d_{v_{1}}}+\frac{1}{d_{w_{i-1}}d_{v_{2}}}+\cdots+\frac{1}{d_{w_{i-1}}d_{v_{r-i}}}$ and $\theta=\frac{r}{d_{u}+r-1}$.

Assume $i\geq3$. We obtain $\psi=\frac{1}{2d_{u}}+\frac{i-2}{4}+\frac{1}{2(r-i+1)}+\frac{r-i}{r-i+1}$.
Since $d_{w_{1}}=2$, $d_{w_{i-2}}=2$ and $d_{u}\geq1$, it is easy to obtain  that $\psi-\theta>\frac{i-2}{4}-\frac{1}{2(r-i+1)}\geq0$.

Assume that $i=2$. Then $r\geq i+1\geq3$. If $d_{u}=1$, then $\psi-\theta=\frac{1}{2}[1-\frac{1}{(r-1)}]>0$. If $d_{u}\geq2$, then $\psi-\theta>\frac{r-3}{2(r+1)(r-1)}\geq0$.

Assume $i=1$. By (ii) of Lemma 2.8, we have $\pi(G_{1}^{'})-\pi(G_{2})>0$.

Set $i=0$. We obtain that $G_{1}^{'}\cong G_{2}$. Hence $\pi(G_{1}^{'})=\pi(G_{2})$.

Combining arguments as above,  we obtain that $\pi(G_{1}^{'})\geq \pi(G_{2})$.
\end{proof}

\begin{lemma}
Let $s, t$ be positive integers satisfying $s\geq t$, $T^{''}$ be a tree with $n\geq4$ vertices which contains two vertices of degree $2$ $u$ and $v$. Let $T$ be a tree obtained from $T^{''}$ by attaching $s$ and $t$ pendant vertices to $u$ and $v$, respectively. Denote by $T_{1}$  the tree obtained from $T^{''}$ by attaching $s+t$ pendant vertices to $u$. Denote by $T_{2}$ the tree obtained from $T^{''}$ by attaching $s+t$ pendant vertices to $v$. See Figure 3 for illustrations of these graphs. Then $\pi(T)>{\rm min}\{\pi(T_{1}), \pi(T_{2})\}$.
\end{lemma}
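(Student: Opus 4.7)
The plan is to derive a single closed formula for $\pi(F_{s,t})$ via Lemma~2.6, so that $\pi(T)$, $\pi(T_1)$, $\pi(T_2)$ arise as specializations of a common polynomial in four matching-sums of $T''$, and then reduce the inequality to the sign analysis of a concave quadratic. Let $A, B, C, D$ denote $\sum_\mu 1/d_{T''}(\mu)$ taken over matchings of $T''$ covering respectively neither of $u, v$, only $u$, only $v$, and both. Classifying each matching of $F_{s,t}$ (the tree $T''$ with $s$ pendants at $u$ and $t$ at $v$) according to whether it uses a pendant edge at $u$ and/or $v$ (at most one at each), and tracking $d_{F_{s,t}}(u) = s+2$ and $d_{F_{s,t}}(v) = t+2$, a direct accounting via Lemma~2.6 yields
\begin{equation*}
\pi(F_{s,t}) = \frac{4\bigl[(s+1)(t+1) A + (t+1) B + (s+1) C + D\bigr]}{(s+2)(t+2)},
\end{equation*}
and in particular $\pi(T_1) = \pi(F_{s+t, 0})$ and $\pi(T_2) = \pi(F_{0, s+t})$.

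Set $a = s+1$, $b = t+1$, $M = a+b$, and view $h(b) := \pi(F_{M-b-1, b-1})$ along the segment $a+b = M$: then $h(b) = 4 N(b)/D(b)$ with $N(b) = -A b^2 + (MA + B - C) b + (MC + D)$ and $D(b) = (M+1-b)(b+1)$. The endpoints give $h(1) = \pi(T_1)$ and $h(M-1) = \pi(T_2)$, while $T$ corresponds to $b = t + 1 \in [2, M/2]$ (using $1 \le t \le s$). Define $f(b) := N(b) D(1) - N(1) D(b)$, so that $\mathrm{sign}(h(b) - h(1)) = \mathrm{sign}(f(b))$. A direct expansion shows that $f$ is quadratic with $f(1) = 0$, $f(M-1) = 2M(M-2)(B - C)$, and leading coefficient $\alpha = -(M+1) A + B + (M - 1) C + D$ in $b^2$.

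The crucial step is to prove $\alpha < 0$, i.e., that $f$ is strictly concave down. I would establish this by matching-exchange at $v$: each matching of $T''$ covering $v$ but not $u$ uses exactly one of the two $T''$-edges at $v$, say $vv^i$; removing this edge produces a matching not covering $u, v$ (contributing to $A$), at weight cost $1/(2 d_{T''}(v^i))$. Since $|V(T'')| \ge 4$ and $u, v$ both have degree $2$ in $T''$, the two neighbors of $v$ in $T''$ cannot both be pendants (otherwise $v$'s component would have only three vertices, contradicting the connectedness of $T''$); hence at least one $d_{T''}(v^i) \ge 2$, yielding the strict bound $C < A$. A symmetric exchange at $u$ yields $B + D \le A + C$. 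Combining,
\begin{equation*}
B + (M-1) C + D = (B + D) + (M - 1) C \le (A + C) + (M - 1) C = A + MC < A(M + 1),
\end{equation*}
so $\alpha < 0$.

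To conclude I would split on the sign of $B - C$. If $B \ge C$, then $f(M-1) \ge 0$, and the concave $f$ with $f(1) = 0$ is strictly positive on $(1, M-1)$, which contains $[2, M/2]$; hence $\pi(T) > \pi(T_1)$. If $B < C$, the analogous quantity $\tilde f(b) := N(b) D(M-1) - N(M-1) D(b)$ satisfies $\tilde f(M-1) = 0$ and $\tilde f(1) = 2M(M-2)(C - B) > 0$, and is likewise concave down by the swapped exchange bounds $B < A$ and $C + D \le A + B$; so $\tilde f > 0$ on $[2, M/2]$, giving $\pi(T) > \pi(T_2)$. In either case $\pi(T) > \min\{\pi(T_1), \pi(T_2)\}$. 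The main obstacle will be the matching-exchange step proving $\alpha < 0$, which requires handling separately the cases where $u$ and $v$ are and are not adjacent in $T''$.
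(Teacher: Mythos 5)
Your proposal is correct, but it follows a genuinely different route from the paper. The paper proves this lemma entirely inside its permanent-expansion framework: using its Lemma 2.3 it peels off the pendant vertices to get ${\rm per}\,L(T)={\rm per}\,L(T'')+2s\,{\rm per}\,L_{u}(T')+2t\,{\rm per}\,L_{v}(T'')$, ${\rm per}\,L(T_{1})={\rm per}\,L(T'')+2(s+t)\,{\rm per}\,L_{u}(T'')$, ${\rm per}\,L(T_{2})={\rm per}\,L(T'')+2(s+t)\,{\rm per}\,L_{v}(T'')$, and then compares $\Delta_{1}=\pi(T)-\pi(T_{1})$ and $\Delta_{2}=\pi(T)-\pi(T_{2})$ using block-matrix estimates such as $2\,{\rm per}\,L_{v}(T'')\le{\rm per}\,L(T'')\le 4\,{\rm per}\,L_{v}(T'')$ and a lower bound on ${\rm per}\,L_{u}(T')$, splitting on whether ${\rm per}\,L_{u}(T'')\ge{\rm per}\,L_{v}(T'')$ --- the analogue of your split on the sign of $B-C$. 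You instead work only with the weighted-matching expansion of Lemma 2.6, and your key facts check out: the closed formula $\pi(F_{s,t})=\frac{4[(s+1)(t+1)A+(t+1)B+(s+1)C+D]}{(s+2)(t+2)}$ is correct (I verified it directly, e.g.\ for $T''=P_{4}$ with $s=t=1$), as are $f(1)=0$, $f(M-1)=2M(M-2)(B-C)$, the leading coefficient $-(M+1)A+B+(M-1)C+D$, and the exchange bounds $C<A$ (strict because $A\ge 1$ and the weight factor summed over the at most two admissible edges at $v$ is at most $\tfrac{3}{4}$) and $B+D\le A+C$, which indeed force strict concavity; the endpoint argument on $(1,M-1)\supset[2,M/2]$ then gives the strict inequality. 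What your approach buys is a uniform interpolation of $T$, $T_{1}$, $T_{2}$ along one segment and an elementary quadratic/concavity analysis, bypassing the paper's rather delicate matrix-block manipulations; the paper's version stays consistent with the Laplacian-permanent toolkit it uses elsewhere. When writing yours up, do include the empty matching in $A$, handle the adjacent and non-adjacent cases of $u,v$ in the exchange bounds (if $uv\in E(T'')$ a type-$C$ matching has only one admissible edge at $v$), and record that $D(b)>0$ on $[1,M-1]$ so that ${\rm sign}(h(b)-h(1))={\rm sign}(f(b))$.
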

\begin{figure}[htbp]
\begin{center}
\includegraphics[scale=0.5]{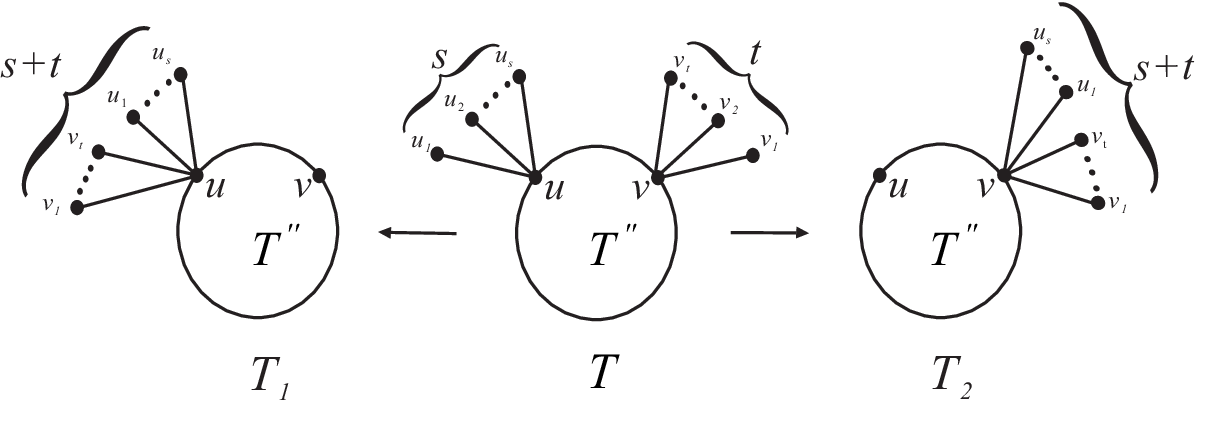}
\caption{\label{fig4}\small Graphs $T$, $T_{1}$ and $T_{2}$. }
\end{center}
\end{figure}
\begin{proof}
Suppose that $T^{'}$ is a tree obtained from $T$ by deleting all  vertices  $u_{1}, u_{2}, \cdots, u_{s}$. By the definitions of $T$, $T_{1}$, $T_{2}$, $T^{'}$ and $T^{''}$, we obtain that
\begin{eqnarray*}
{\rm per} L_{u}(T-u_{1})&=&{\rm per} L_{u}(T-u_{1}-u_{2})\\
&=&\cdots\\
&=&{\rm per} L_{u}(T-u_{1}-u_{2}-\cdots-u_{s-1}-u_{s})\\
&=&{\rm per} L_{u}(T^{'}),\\
{\rm per} L_{v}(T^{'}-v_{1})&=&{\rm per} L_{v}(T^{'}-v_{1}-v_{2})\\
&=&\cdots\\
&=&{\rm per} L_{v}(T^{'}-v_{1}-v_{2}-\cdots-v_{t-1}-v_{t})\\
&=&{\rm per} L_{v}(T^{''}),\\
{\rm per} L_{u}(T_{1}-u_{1})&=&{\rm per} L_{u}(T_{1}-u_{1}-u_{2})\\
&=&\cdots\\
&=&{\rm per} L_{u}(T_{1}-u_{1}-u_{2}-\cdots-u_{s-1}-u_{s})\\
&=&{\rm per} L_{u}(T_{1}-u_{1}-u_{2}-\cdots-u_{s-1}-u_{s}-v_{1})\\
&=&{\rm per} L_{u}(T_{1}-u_{1}-u_{2}-\cdots-u_{s-1}-u_{s}-v_{1}-v_{2})\\
&=&\cdots\\
&=&{\rm per} L_{u}(T_{1}-u_{1}-u_{2}-\cdots-u_{s-1}-u_{s}-v_{1}-v_{2}-\cdots-v_{t-1}-v_{t})\\
&=&{\rm per} L_{u}(T^{''})
\end{eqnarray*}
and
\begin{eqnarray*}
{\rm per} L_{v}(T_{2}-u_{1})&=&{\rm per} L_{v}(T_{2}-u_{1}-u_{2})\\
&=&\cdots\\
&=&{\rm per} L_{v}(T_{2}-u_{1}-u_{2}-\cdots-u_{s-1}-u_{s})\\
&=&{\rm per} L_{v}(T_{2}-u_{1}-u_{2}-\cdots-u_{s-1}-u_{s}-v_{1})\\
&=&{\rm per} L_{v}(T_{2}-u_{1}-u_{2}-\cdots-u_{s-1}-u_{s}-v_{1}-v_{2})\\
&=&\cdots\\
&=&{\rm per} L_{v}(T_{2}-u_{1}-u_{2}-\cdots-u_{s-1}-u_{s}-v_{1}-v_{2}-\cdots-v_{t-1}-v_{t})\\
&=&{\rm per} L_{v}(T^{''}).
\end{eqnarray*}

By Lemma 2.3, deleting $u_{1}$, $u_{2}$, $\cdots$, $u_{s}$, $v_{1}$, $v_{2}$, $\cdots$, $v_{t}$ one by one in $T$, $T_{1}$ and $T_{2}$, we obtain that
\begin{eqnarray*}
{\rm per} L(T)&=&{\rm per} L(T-u_{1})+2~{\rm per} L_{u}(T-u_{1})\\
&=&{\rm per} L(T-u_{1}-u_{2})+2~{\rm per} L_{u}(T-u_{1})+2~{\rm per} L_{u}(T-u_{1}-u_{2})\\
&=&\cdots\\
&=&{\rm per} L(T-u_{1}-u_{2}-\cdots-u_{s})+2~{\rm per} L_{u}(T-u_{1})+2~{\rm per} L_{u}(T-u_{1}-u_{2})\\
&&+\cdots+2~{\rm per} L_{u}(T-u_{1}-u_{2}-\cdots-u_{s-1}-u_{s})\\
&=&{\rm per} L(T^{'})+2s~{\rm per} L_{u}(T^{'})\\
&=&{\rm per} L(T^{'}-v_{1})+2s~{\rm per} L_{u}(T^{'})+2~{\rm per} L_{v}(T^{'}-v_{1})\\
&=&{\rm per} L(T^{'}-v_{1}-v_{2})+2s~{\rm per} L_{u}(T^{'})+2~{\rm per} L_{v}(T^{'}-v_{1})+2~{\rm per} L_{v}(T^{'}-v_{1}-v_{2})\\
&=&\cdots\\
&=&{\rm per} L(T^{'}-v_{1}-v_{2}-\cdots-v_{t-1}-v_{t})+2s~{\rm per} L_{u}(T^{'})+2~{\rm per} L_{v}(T^{'}-v_{1})\\
&&+2~{\rm per} L_{v}(T^{'}-v_{1}-v_{2})+\cdots+2{\rm per} L_{v}(T^{'}-v_{1}-v_{2}-\cdots-v_{t-1}-v_{t})\\
&=&{\rm per} L(T^{'}-v_{1}-v_{2}-\cdots-v_{t-1}-v_{t})+2s~{\rm per} L_{u}(T^{'})+2t~{\rm per} L_{v}(T^{''})\\
&=&{\rm per} L(T^{''})+2s~{\rm per} L_{u}(T^{'})+2t~{\rm per} L_{v}(T^{''}),
\end{eqnarray*}
\begin{eqnarray*}
{\rm per} L(T_{1})&=&{\rm per} L(T_{1}-u_{1})+2~{\rm per} L_{u}(T_{1}-u_{1})\\
&=&{\rm per} L(T_{1}-u_{1}-u_{2})+2~{\rm per} L_{u}(T_{1}-u_{1})+2~{\rm per} L_{u}(T_{1}-u_{1}-u_{2})\\
&=&\cdots\\
&=&{\rm per} L(T_{1}-u_{1}-u_{2}-\cdots-u_{s}-v_{1}-v_{2}-\cdots-v_{t-1}-v_{t})\\
&&+2~{\rm per} L_{u}(T_{1}-u_{1})+2~{\rm per} L_{u}(T_{1}-u_{1}-u_{2})+\cdots+2{\rm per} L_{u}(T_{1}\\
&&-u_{1}-u_{2}-\cdots-u_{s-1}-u_{s}-v_{1}-v_{2}-\cdots-v_{t-1}-v_{t})\\
&=&{\rm per} L(T^{''})+2(s+t)~{\rm per} L_{u}(T^{''}),
\end{eqnarray*}
\begin{eqnarray*}
{\rm per} L(T_{2})&=&{\rm per} L(T_{2}-u_{1})+2~{\rm per} L_{v}(T_{2}-u_{1})\\
&=&{\rm per} L(T_{2}-u_{1}-u_{2})+2~{\rm per} L_{v}(T_{2}-u_{1})+2~{\rm per} L_{v}(T_{2}-u_{1}-u_{2})\\
&=&\cdots\\
&=&{\rm per} L(T_{2}-u_{1}-u_{2}-\cdots-u_{s-1}-u_{s}-v_{1}-v_{2}-\cdots-v_{t-1}-v_{t})\\
&&+2~{\rm per} L_{v}(T_{2}-u_{1})+2~{\rm per} L_{v}(T_{2}-u_{1}-u_{2})+\cdots+2~{\rm per} L_{v}(T_{2}\\
&&-u_{1}-u_{2}-\cdots-u_{s-1}-u_{s}-v_{1}-v_{2}-\cdots-v_{t-1}-v_{t})\\
&=&{\rm per} L(T^{''})+2(s+t)~{\rm per} L_{v}(T^{''}).
\end{eqnarray*}

By the definitions of ~$T^{''}$, $T$, $T_{1}$ and $T_{2}$. Direct computing yields
 \begin{eqnarray*}
PD(T)&=&\frac{(s+2)(t+2)}{4}PD(T^{''}),\\
PD(T_{1})&=&\frac{(s+t+2)}{2}PD(T^{''}), \\
PD(T_{2})&=&\frac{(s+t+2)}{2}PD(T^{''}).
\end{eqnarray*}
 Hence
\begin{eqnarray*}
\pi(T)&=&\frac{{\rm per} L(T)}{\frac{(s+2)(t+2)}{4}PD(T^{''})}=\frac{4}{PD(T^{''})}[\frac{{\rm per} L(T^{''})}{(s+2)(t+2)}+\frac{2s~{\rm per} L_{u}(T^{'})+2t~{\rm per} L_{v}(T^{''})}{(s+2)(t+2)}],\\
\pi(T_{1})&=&\frac{{\rm per} L(T_{1})}{\frac{(s+t+2)}{2}PD(T^{''})}=\frac{4}{PD(T^{''})}[\frac{{\rm per} L(T^{''})}{2(s+t+2)}+\frac{2(s+t)~{\rm per} L_{u}(T^{''})}{2(s+t+2)}],\\
\pi(T_{2})&=&\frac{{\rm per} L(T_{2})}{\frac{(s+t+2)}{2}PD(T^{''})}=\frac{4}{PD(T^{''})}[\frac{{\rm per} L(T^{''})}{2(s+t+2)}+\frac{2(s+t)~{\rm per} L_{v}(T^{''})}{2(s+t+2)}].
\end{eqnarray*}

Since $d(v)=2$, the matrix $L(T^{''})$ has the following form

\begin{eqnarray}\label{equ35}
L(T^{''})=\left[
\begin{matrix}
\begin{array}{ccccc|c|ccccc}
  &      &  &  &    &    0  &           &    &       &\\
  &      &  &  &    &\vdots &           &    &       &\\
  &      & A & &     &    0  &           &   &       &\\
  &      &  &  &     &  0    &           &    &       &\\
  &      &  &  &     &  -1  &           &    &       &\\
\hline
0&\cdots &0 & 0& -1  &  2  &    -1  &  0 &  0 &\cdots & 0 \\
\hline
  &      &  &  &     &  -1  &       &   &    &       &\\
   &       & &  &     &   0  &      &     &  &       &\\
  &       & &  &     &   0  &       &   &   B &       &\\
  &       & &  &     &\vdots &      &    &    &        &\\
  &       & &  &     &   0  &       &   &    &       &  \\
\end{array}
\end{matrix}\right],
\end{eqnarray}
where $A$ and $B$ are two submatrices.

And the matrix $L_{u}(T^{''})$ has the following form

~$L_{u}(T^{''})=\left[
\begin{matrix}
\begin{array}{ccccc|c|ccccc|c}
  &      &  &  &    &    0  &           &    &       &  &\\
  &      &  &  &    &\vdots &           &    &       &  &\\
  &      & A & &     &    0  &           &   &       &   &\\
  &      &  &  &     &  0    &           &    &      &   &\\
  &      &  &  &     &  -1  &           &    &       &   &\\
\hline
0&\cdots &0 & 0& -1  &  2  &    -1  &  0 &  0 &\cdots & 0 &\\
\hline
  &      &  &  &     &  -1  &       &   &    &       &   & \\
   &       & &  &     &   0  &      &     &  &       &    &\\
  &       & &  &     &   0  &       &   &   B^{'} &   &    &\\
  &       & &  &     &\vdots &      &    &    &      &     &\\
  &       & &  &     &   0  &       &   &    &       &     &  \\
\hline
 &       &  &  &     &       &      &    &     &    &      & C\\
\end{array}
\end{matrix}\right]$,\\
where $A$ , $B^{'}$ and $C$ are three submatrices.

Similarly, since the vertex $v$ in $T^{'}$ has degree $t+2$, the matrix $L_{u}(T^{'})$ has the following form

~$L_{u}(T^{'})=\left[
\begin{matrix}
\begin{array}{ccccc|ccccc|ccccc|c}
  &      &  &  &    &    0  &     &    &         &     &       &    &    &       &   &\\
  &      &  &  &    &\vdots &     &    &         &     &      &     &    &       &   &\\
  &      & A & &     &    0  &    &    &         &     &      &     &    &       &   &\\
  &      &  &  &     &  0    &    &    &         &     &      &     &    &       &   &\\
  &      &  &  &     &  -1  &     &    &         &     &      &     &    &       &   &\\
\hline
0&\cdots &0 & 0& -1  &  t+2 &    -1     & -1 &  \cdots &  -1&   -1  &  0 &  0 &\cdots & 0 &\\
 &      &  &  &     &  -1  &    1      &    &         &     &      &     &   &       &   &\\
 &      &  &  &     &  -1  &           &  1 &         &     &      &     &   &       &   &\\
 &      &  &  &     &\vdots &          &    &\ddots   &     &      &     &   &       &   &\\
 &      &  &  &     &  -1  &           &    &         &  1  &      &     &   &       &   &\\
\hline
  &      &  &  &     &  -1  &          &     &        &      &     &     &    &       &   & \\
   &       & &  &     &   0  &         &     &        &     &      &     &    &       &   &\\
  &       & &  &     &   0  &          &     &        &     &      &     &   B^{'} &  &   &\\
  &       & &  &     &\vdots &         &     &        &     &      &     &    &       &   &\\
  &       & &  &     &   0  &          &     &        &     &      &     &    &       &   &  \\
\hline
 &       &  &  &     &       &         &     &        &     &      &     &     &       &   & C\\
\end{array}
\end{matrix}\right]$.

Suppose that $M$ is the matrix obtained by deleting the last row and the last column in $A$. Let $N$ be the matrix obtained by deleting the first row and the first column in $B$, and let $N^{'}$ be the matrix obtained by deleting the first row and the first column in $B^{'}$.  Direct computing yields
$${\rm per}L(T^{''})=2~{\rm per}A{\rm per}B+{\rm per}M{\rm per}B+{\rm per}A{\rm per}N,$$
\begin{eqnarray}\label{equ10}
{\rm per} L_{u}(T^{'})={\rm per}C[(s+s+2)~{\rm per}A{\rm per}B^{'}+{\rm per}M{\rm per}B^{'}+{\rm per}A{\rm per}N^{'}],
\end{eqnarray}
\begin{eqnarray}\label{equ11}
{\rm per} L_{u}(T^{''})={\rm per}C(2~{\rm per}A{\rm per}B^{'}+{\rm per}M{\rm per}B^{'}+{\rm per}A{\rm per}N^{'}).
\end{eqnarray}
Thus,
$$2~{\rm per}A{\rm per}B\leq{\rm per}L(T^{''})\leq4~{\rm per}A{\rm per}B,$$
\begin{eqnarray}\label{equ12}
2~{\rm per}A{\rm per}B^{'}\leq{\rm per} L_{u}(T^{''})\leq4~{\rm per}A{\rm per}B^{'},
\end{eqnarray}
Combining (\ref{equ10}), (\ref{equ11}) and (\ref{equ12}), we have
\begin{eqnarray}\label{equ13}
{\rm per} L_{u}(T^{'})\geq\frac{5(s+t)}{4}~{\rm per} L_{u}(T^{''}).
\end{eqnarray}
By (\ref{equ35}), we have ${\rm per}A{\rm per}B={\rm per} L_{u}(T^{''})$. Therefore $2{\rm per} L_{v}(T^{''})\leq{\rm per}L(T^{''})\leq4{\rm per} L_{v}(T^{''})$. Similarly  we have $2{\rm per} L_{u}(T^{''})\leq{\rm per}L(T^{''})\leq4{\rm per} L_{u}(T^{''})$.

\begin{eqnarray*}
\Delta_{1}&=&\pi(T)-\pi(T_{1})\\
&=&\frac{4}{PD(T^{''})}[\frac{{\rm per} L(T^{''})}{(s+2)(t+2)}+\frac{2s~{\rm per} L_{u}(T^{'})+2t~{\rm per}L_{v}(T^{''})}{(s+2)(t+2)}]\\
&&-\frac{4}{PD(T^{''})}[\frac{{\rm per} L(T^{''})}{2(s+t+2)}+\frac{2(s+t){\rm per} L_{u}(T^{''})}{2(s+t+2)}]\\
&=&\frac{4}{PD(T^{''})}[(\frac{1}{(s+2)(t+2)}-\frac{1}{2(s+t+2)})~{\rm per}L(T^{''})+(\frac{2s~{\rm per} L_{u}(T^{'})+2t~{\rm per} L_{v}(T^{''})}{(s+2)(t+2)}\\
&&-\frac{2(s+t)~{\rm per} L_{u}(T^{''})}{2(s+t+2)})]\\
&\geq&\frac{4}{PD(T^{''})}[(\frac{1}{(s+2)(t+2)}-\frac{1}{2(s+t+2)})\times4~{\rm per} L_{u}(T^{''})\\
&&+(\frac{2s~{\rm per} L_{u}(T^{'})+2t~{\rm per} L_{v}(T^{''})}{(s+2)(t+2)}-\frac{2(s+t)~{\rm per} L_{u}(T^{''})}{2(s+t+2)})].
\end{eqnarray*}

\begin{eqnarray*}
\Delta_{2}&=&\pi(T)-\pi(T_{2})\\
&=&\frac{4}{PD(T^{''})}[\frac{{\rm per} L(T^{''})}{(s+2)(t+2)}+\frac{2s~{\rm per} L_{u}(T^{'})+2t~{\rm per} L_{v}(T^{''})}{(s+2)(t+2)}]\\
&&-\frac{4}{PD(T^{''})}[\frac{{\rm per} L(T^{''})}{2(s+t+2)}+\frac{2(s+t)~{\rm per} L_{v}(T^{''})}{2(s+t+2)}]\\
&=&\frac{4}{PD(T^{''})}[(\frac{1}{(s+2)(t+2)}-\frac{1}{2(s+t+2)})~{\rm per}L(T^{''})+(\frac{2s~{\rm per} L_{u}(T^{'})+2t~{\rm per} L_{v}(T^{''})}{(s+2)(t+2)}\\
&&-\frac{2(s+t)~{\rm per} L_{v}(T^{''})}{2(s+t+2)})]\\
&\geq&\frac{4}{PD(T^{''})}[(\frac{1}{(s+2)(t+2)}-\frac{1}{2(s+t+2)})\times4~{\rm per} L_{u}(T^{''})\\
&&+(\frac{2s~{\rm per}L_{u}(T^{'})+2t~{\rm per} L_{v}(T^{''})}{(s+2)(t+2)}-\frac{2(s+t)~{\rm per} L_{v}(T^{''})}{2(s+t+2)})].
\end{eqnarray*}

If ${\rm per} L_{u}(T^{''})\geq{\rm per} L_{v}(T^{''})$, then we obtain from (\ref{equ13}) that
$$\Delta_{1}\geq\frac{(\frac{3}{4}t^{2}+2st+\frac{1}{2}t+\frac{3}{2}s+\frac{5}{4}s^{2}-2)s~{\rm per} L_{v}(T^{''})}{\frac{1}{2}(s+2)(t+2)(s+t+2)}.$$
Since $s\geq t\geq1$, it follows that $\Delta_{1}>0$.

If ${\rm per} L_{u}(T^{''})<{\rm per} L_{v}(T^{''})$, then we obtain from (\ref{equ13}) that
$$\Delta_{2}\geq\frac{(\frac{3}{4}st^{2}+2s^{2}t+\frac{1}{2}st+\frac{5}{4}s^{3}+\frac{3}{2}s^{2}+2t-2s)~{\rm per} L_{u}(T^{''})}{\frac{1}{2}(s+2)(t+2)(s+t+2)}.$$
Since $s\geq t\geq1$, it follows that $\Delta_{2}>0$.

Combining the arguments as above, we have $\pi(T)>{\rm min}\{\pi(T_{1}), \pi(T_{2})\}$.
\end{proof}

\begin{lemma}(\cite{bru})
Let $T$ be a tree with $n$ vertices and diameter at least $k$. Then
$${\rm per}L(T)\geq{\rm per}L(B(n,k)),$$
where equality holds if and only if $T$ is broom $B(n,k)$.
\end{lemma}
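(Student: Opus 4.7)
The plan is to proceed by induction on $n$, using Lemma 2.3 as the main computational engine and a pendant-relocation operation as the structural reduction. For the base case $n=k+1$, the only tree on $n$ vertices with diameter at least $k=n-1$ is $P_{n}=B(n,n-1)$, so equality holds trivially. For the inductive step with $n\geq k+2$, I would fix a longest path $P=v_{0}v_{1}\cdots v_{d}$ with $d\geq k$ in $T$ and first reduce to the case $d=k$: the diameter $d>k$ case follows from the inductive claim applied with parameter $d$, provided one verifies $\mathrm{per}\,L(B(n,k+1))\geq \mathrm{per}\,L(B(n,k))$, which is a direct Lemma 2.3 expansion at the "straightening" pendant-edge of the broom.

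Assume $d=k$ and $T\neq B(n,k)$. Then there exists a pendant $w$ of $T$ attached to an interior path vertex $v_{i}$ with $1\leq i\leq k-1$. Let $T'$ be the tree obtained from $T$ by detaching $w$ from $v_{i}$ and reattaching it to the endpoint $v_{0}$; note that $T-w$ and $T'-w$ are isomorphic as labelled trees away from the relocation site. Applying Lemma 2.3 to the pendant $w$ in both trees yields
$$\mathrm{per}\,L(T)-\mathrm{per}\,L(T')=2\bigl[\mathrm{per}\,L_{v_{i}}(T-w)-\mathrm{per}\,L_{v_{0}}(T-w)\bigr].$$
The core of the proof is thus to establish the principal-minor inequality $\mathrm{per}\,L_{v_{i}}(T-w)\geq \mathrm{per}\,L_{v_{0}}(T-w)$. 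I would prove this by induction on $i$, telescoping with Lemma 2.3 applied successively to the path-pendants $v_{1},\ldots,v_{i-1}$; the intuition is that deleting a middle path-vertex splits the Laplacian into two diagonal blocks each contributing a positive diagonal term, whereas deleting the endpoint leaves a single, more constrained block.

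Iterating the relocation operation finitely many times carries $T$ to $B(n,k)$, with $\mathrm{per}\,L$ non-increasing at every step and strictly decreasing whenever an actual move occurs, giving $\mathrm{per}\,L(T)\geq \mathrm{per}\,L(B(n,k))$ with equality iff $T=B(n,k)$. The main obstacle will be the minor-comparison step: the inequality $\mathrm{per}\,L_{v_{i}}(T-w)\geq \mathrm{per}\,L_{v_{0}}(T-w)$ requires careful bookkeeping because the subtrees hanging off $v_{0},\ldots,v_{i}$ can be arbitrary, and the induction base $i=1$ — where only a single path-edge separates the relocation source from the endpoint — is where signs and degree changes most easily conspire to break the desired monotonicity, so that step will need a direct matrix-level verification.
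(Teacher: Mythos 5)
Your reduction identity is fine: since $T-w=T'-w$, Lemma 2.3 does give
$\mathrm{per}\,L(T)-\mathrm{per}\,L(T')=2\bigl[\mathrm{per}\,L_{v_i}(T-w)-\mathrm{per}\,L_{v_0}(T-w)\bigr]$.
But the inequality you build the whole proof on, $\mathrm{per}\,L_{v_i}(T-w)\ge \mathrm{per}\,L_{v_0}(T-w)$, is false, and in the relevant cases it fails in exactly the opposite direction. Take $T-w=P_3$ with vertices $v_0v_1v_2$: deleting the interior vertex gives $\mathrm{per}\,L_{v_1}(P_3)=\mathrm{per}\,\mathrm{diag}(1,1)=1$, while deleting the endpoint gives $\mathrm{per}\,L_{v_0}(P_3)=\mathrm{per}\begin{pmatrix}2&-1\\-1&1\end{pmatrix}=3$. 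Concretely, with $T=S_4=B(4,2)$ (pendant $w$ at the middle vertex $v_1$) and $T'=P_4$ (pendant $w$ moved to $v_0$) one gets $\mathrm{per}\,L(T)=6<10=\mathrm{per}\,L(T')$, so your relocation strictly \emph{increases} the permanent. This is not a repairable bookkeeping issue: iterating ``move a pendant to the endpoint of the longest path'' drives $T$ toward $P_n$, and $P_n$ is the \emph{maximizer} of $\mathrm{per}\,L$ among trees, so a monotone non-increasing transformation with that target cannot exist. The underlying misreading is structural: $B(n,k)$ has its $n-k-1$ extra pendants attached at $v_1$ (respectively $v_{k-1}$), the neighbor of an endpoint of a longest path $v_0v_1\cdots v_k$, not at the endpoint $v_0$; attaching anything to $v_0$ raises the diameter to $k+1$ and leaves the broom class entirely. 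A correct relocation argument would have to gather off-path vertices at $v_1$ while keeping the diameter equal to $k$ (this is what the paper's $\pi$-decreasing operations in Lemmas 2.8--2.9 do at the level of the Laplacian ratio), and your claim that some leaf is always attached to a path vertex also silently assumes $T$ is a caterpillar, which needs a separate reduction.

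Two smaller points: your base case and the monotonicity $\mathrm{per}\,L(B(n,k+1))\ge\mathrm{per}\,L(B(n,k))$ (which follows from Lemma 2.11) are fine, but they cannot rescue the main step. Note also that the paper does not prove this lemma at all; it is quoted directly from Brualdi and Goldwasser, so any proof you supply must stand on its own, and as written yours does not.
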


\begin{lemma}(\cite{bru})
Let $n\geq3$ and $k\geq2$. Suppose that $Q_{k}$ is the matrix obtained from $L(P_{n+1})$ by eliminating row $1$ and column $1$. Then
$${\rm per}L(B(n,k)=(2n-2k+1){\rm per}Q_{k-1}+{\rm per}Q_{k-2},$$
$${\rm per}Q_{k}=\frac{1}{2}(1+\sqrt{2})^{k}+\frac{1}{2}(1-\sqrt{2})^{k}.$$

\end{lemma}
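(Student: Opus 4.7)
The plan is to prove the two displayed identities separately. The first is a combinatorial unfolding of $B(n,k)$ using the pendant-deletion identity of Lemma 2.3, and the second is a solved second-order linear recurrence obtained by expanding $\mathrm{per}\,Q_k$ via Lemma 2.1.

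For the first identity, label the vertices of $B(n,k)$ as $v_1,\ldots,v_k$ along the spine (with $v_k$ an endpoint of degree $1$) and $u_1,\ldots,u_{n-k}$ for the pendants attached to $v_1$. I will iteratively apply Lemma 2.3 to these pendants. Each application gives
$$\mathrm{per}\,L(B(j,k)) = \mathrm{per}\,L(B(j-1,k)) + 2\,\mathrm{per}\,L_{v_1}(B(j-1,k)).$$
The crucial observation is that $L_{v_1}(B(j-1,k))$ is block-diagonal: after striking the row and column of $v_1$, the spine $v_2,\ldots,v_k$ contributes exactly the tridiagonal matrix $Q_{k-1}$ (with diagonal $(2,\ldots,2,1)$), while the remaining pendants contribute a diagonal block of $1$'s. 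Hence $\mathrm{per}\,L_{v_1}(B(j-1,k)) = \mathrm{per}\,Q_{k-1}$ at every step. After peeling all $n-k$ pendants I am left with $\mathrm{per}\,L(B(n,k)) = \mathrm{per}\,L(P_k) + 2(n-k)\,\mathrm{per}\,Q_{k-1}$. Applying Lemma 2.2 at the endpoint $v_1$ of $P_k$ (the cycle sum is empty since $P_k$ is a tree) yields $\mathrm{per}\,L(P_k) = \mathrm{per}\,L_{v_1}(P_k) + \mathrm{per}\,L_{v_1 v_2}(P_k) = \mathrm{per}\,Q_{k-1} + \mathrm{per}\,Q_{k-2}$, since the first principal submatrix is $Q_{k-1}$ and the second is $Q_{k-2}$. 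Substituting produces the desired $(2n-2k+1)\,\mathrm{per}\,Q_{k-1} + \mathrm{per}\,Q_{k-2}$.

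For the closed form, I first derive a linear recurrence for $\mathrm{per}\,Q_k$. Expand along the first row of $Q_k$ using Lemma 2.1: the only nonzero entries there are $2$ in position $(1,1)$ and $-1$ in position $(1,2)$. Striking row $1$ and column $1$ leaves $Q_{k-1}$ verbatim, contributing $2\,\mathrm{per}\,Q_{k-1}$. Striking row $1$ and column $2$ produces a matrix whose first column has a single nonzero entry $-1$ at position $(1,1)$; expanding along that column yields $-\,\mathrm{per}\,Q_{k-2}$. Collecting signs gives the recurrence
$$\mathrm{per}\,Q_k = 2\,\mathrm{per}\,Q_{k-1} + \mathrm{per}\,Q_{k-2},$$
with directly checked initial values $\mathrm{per}\,Q_1 = 1$ and $\mathrm{per}\,Q_2 = 3$. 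The characteristic polynomial $x^2 - 2x - 1 = 0$ has roots $1 \pm \sqrt{2}$, and the two initial conditions pin both coefficients down to $\tfrac{1}{2}$, yielding the stated closed form.

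The main bookkeeping obstacle is tracking signs through the permanent expansions (all summands are positive, but the off-diagonal entries of $L$ and $Q_k$ are $-1$, which must be carried consistently through each minor), and distinguishing the Laplacian of the subgraph $G-v$ from the principal submatrix $L_v(G)$. In the pendant-peeling step only $L_v(G)$ keeps the original degree entries on the surviving pendants, which is what makes the inductive block structure close up cleanly; once that distinction is respected, both computations are mechanical.
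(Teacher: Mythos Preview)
The paper does not prove this lemma at all; it is quoted from Brualdi--Goldwasser \cite{bru} and stated without argument. There is therefore no in-paper proof to compare against.

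Your proof is correct. The pendant-peeling via Lemma~2.3 cleanly reduces $\mathrm{per}\,L(B(n,k))$ to $\mathrm{per}\,L(P_k) + 2(n-k)\,\mathrm{per}\,Q_{k-1}$, and your identification of $L_{v_1}(B(j-1,k))$ as a block-diagonal matrix with a $Q_{k-1}$ block and an identity block is exactly right (the degree entries on the spine are preserved because $L_{v_1}$ is a principal submatrix, not the Laplacian of a subgraph). The evaluation of $\mathrm{per}\,L(P_k)$ via Lemma~2.2 at the endpoint also checks out: $L_{v_1}(P_k)=Q_{k-1}$ and $L_{v_1v_2}(P_k)=Q_{k-2}$ since both retain the original degree $2$ on their top diagonal entry. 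For the closed form, the cofactor expansion yielding $\mathrm{per}\,Q_k = 2\,\mathrm{per}\,Q_{k-1} + \mathrm{per}\,Q_{k-2}$ is correct (the two $-1$ entries multiply to $+1$), the initial values $\mathrm{per}\,Q_1=1$, $\mathrm{per}\,Q_2=3$ are right, and solving the characteristic equation $x^2-2x-1=0$ with these initial conditions indeed forces both coefficients to be $\tfrac{1}{2}$. One small notational remark: the statement in the paper writes $L(P_{n+1})$ where $L(P_{k+1})$ is meant (otherwise $Q_k$ would not be $k\times k$); you have silently corrected this, which is fine.
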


\noindent\textbf{Proof of Theorem 1.1}. Let $T$ be a tree with $n$ vertices having diameter $k$.
Suppose  that  $v_{1}v_{2}\cdots v_{k+1}$ is a path with diameter $k$ in $T$. All branches  hanged on the vertex $v_{i}(i=2,3,\cdots,k)$ in  path $v_{1}v_{2}\cdots v_{k+1}$ are changed into a star with the center $v_{i}$, and the number of pendents vertices of the star equals to the number of vertices of all branches.
Then,  $T$ is transformed into a caterpillar $T_{0}$ whose spine is the path $v_{1}v_{2}\cdots v_{k+1}$. By (iii) of Lemma 2.8, we have $\pi(T)\geq\pi(T_{0})$.  Repeated merging the leaves of  vertices $v_{i}$ and $v_{j}$  into $v_{i}$ or $v_{j}$ in path $v_{1}v_{2}\cdots v_{k+1}$, $T_{0}$ is transformed into $T_{1}$ which is a new caterpillar obtained by joining $n-k-1$ vertices to the vertex $v_{s}$ of path $v_{1}v_{2}\cdots v_{k+1}$, where $2\leq i,j,s\leq k+1$. By  Lemma 2.9, it can be known that $\pi(T_{0})\geq\pi(T_{1})$. Since $PD(T_{1})=PD(B(n,k))$,  by Lemma 2.10, we have $\pi (T_{1})\geq\pi (B(n,k))$. For all broom graphs  with $n$ vertices having diameter at least $k$, by (ii) of  Lemma 2.8, it is easy to obtain that $\pi(B(n,k))$ is minimum. So,
$$\pi(T)\geq\frac{{\rm per}B(n,k)}{PD(B(n,k))},$$

By Lemma 2.11, we obtain that
$$\pi(T)\geq\biggl[1+\sqrt{2}-\frac{\sqrt{2}}{2(n-k+1)}\biggl]\biggl(\frac{1+\sqrt{2}}{2}\biggl)^{k-2}
+\biggl[1-\sqrt{2}+\frac{\sqrt{2}}{2(n-k+1)}\biggl]\biggl(\frac{1-\sqrt{2}}{2}\biggl)^{k-2}$$
with equality holding if and only if ~$T$ is broom $B(n,k)$. \qquad \qquad\qquad\qquad\qquad\qquad\qquad\quad$\Box$

\section{Summarize}

In this paper, we determined the lower bound of Laplacian ratios of trees, and the corresponding extremal graph is also determined. In general it is difficult to evaluate Laplacian ratios of graphs, unless the graphs are of very special type. It is precisely because of the difficulty of the calculation of Laplacian ratio that there are a few published papers on Laplacian ratio. Except for the problems raised by Brualdi and Goldwasser, there exist some problems to worth study. Such as: characterizing the bound of Laplacian ratios of general graphs, and determining the bound of Laplacian ratios of general graphs with given graph parameters, etc.. These problems are the direction of our future research.

\noindent{\bf Conflict of Interest Statement}

The authors declare that they have no conflicts of interest.

\end{document}